\def\frk{\frak}               
\def\pp{{\frk p}}
\def\mm{{\frk m}}
\def\Phi{{\frk n}}
\def\Phi{{\frk N}}
\def\opn#1#2{\def#1{\operatorname{#2}}} 
\opn\chara{char} \opn\length{\ell} \opn\pd{pd} \opn\rk{rk}
\opn\projdim{proj\,dim} \opn\injdim{inj\,dim} \opn\rank{rank}
\opn\depth{depth} \opn\sdepth{sdepth} \opn\fdepth{fdepth}
\opn\grade{grade} \opn\height{height} \opn\embdim{emb\,dim}
\opn\codim{codim}  \opn\min{min} \opn\max{max}
\opn\Tr{Tr} \opn\bigrank{big\,rank}
\opn\superheight{superheight}\opn\lcm{lcm}
\opn\trdeg{tr\,deg}
\opn\reg{reg} \opn\lreg{lreg} \opn\ini{in} \opn\lpd{lpd}
\opn\size{size}
\opn\div{div} \opn\Div{Div} \opn\cl{cl} \opn\Cl{Cl}
\opn\Spec{Spec} \opn\Supp{Supp} \opn\supp{supp} \opn\Sing{Sing}
\opn\Ass{Ass} \opn\Min{Min}
\opn\Ann{Ann} \opn\Rad{Rad} \opn\Soc{Soc}
\opn\Im{Im} \opn\Ker{Ker} \opn\Coker{Coker} \opn\Am{Am}
\opn\Hom{Hom} \opn\Tor{Tor} \opn\Ext{Ext} \opn\End{End}
\opn\Aut{Aut} \opn\id{id}  \opn\deg{deg}
\opn\nat{nat}
\opn\pff{pf}
\opn\Pf{Pf} \opn\GL{GL} \opn\SL{SL} \opn\mod{mod} \opn\ord{ord}
\opn\Gin{Gin} \opn\Hilb{Hilb}
\opn\aff{aff} \opn\con{conv} \opn\relint{relint} \opn\st{st}
\opn\lk{lk} \opn\cn{cn} \opn\core{core} \opn\vol{vol}
\opn\link{link} \opn\star{star}
\opn\gr{gr}
\def\pot#1#2{#1[\kern-0.28ex[#2]\kern-0.28ex]}
\opn\dirlim{\underrightarrow{\lim}}
\opn\inivlim{\underleftarrow{\lim}}
\let\iso=\cong
\let\to=\rightarrow
\def\Implies{\ifmmode\Longrightarrow \else
        \unskip${}\Longrightarrow{}$\ignorespaces\fi}
\def\implies{\ifmmode\Rightarrow \else
        \unskip${}\Rightarrow{}$\ignorespaces\fi}
\def\iff{\ifmmode\Longleftrightarrow \else
        \unskip${}\Longleftrightarrow{}$\ignorespaces\fi}
\newtheorem{Theorem}{Theorem}[]
\newtheorem{Lemma}[Theorem]{Lemma}
\newtheorem{Corollary}[Theorem]{Corollary}
\newtheorem{Proposition}[Theorem]{Proposition}
\theoremstyle{definition}
\newtheorem{Example}[Theorem]{Example}
\newtheorem{Remark}[Theorem]{Remark}
\newtheoremstyle{subsection-tweak}
   {11pt}
   {3pt}%
   {}
   {}%
   {\bfseries}
   {}%
   {.5em}
   {\thmnumber{\@{#1}{}\@{#2}.}%
    \thmnote{~{\bfseries#3.}}}    
\newcounter{numberingbase}
\theoremstyle{subsection-tweak}
\newtheorem{bpp}[Theorem]{}
\newtheorem{bppt}[numberingbase]{}
\newcommand{\bbpp}{\begin{bpp}}
\newcommand{\eepp}{\end{bpp}}
\newcommand{\bbppt}{\begin{bppt}}
\newcommand{\eeppt}{\end{bppt}}
\theoremstyle{theorem}
\theoremstyle{definition}
\newcommand{\val}{\mathrm{val}}		
\let\epsilon\varepsilon
\let\phi=\varphi
\def\qed{\ifhmode\textqed\fi
      \ifmmode\ifinner\quad\qedsymbol\else\dispqed\fi\fi}
\def\textqed{\unskip\nobreak\penalty50
       \hskip2em\hbox{}\nobreak\hfil\qedsymbol
       \parfillskip=0pt \finalhyphendemerits=0}
\def\dispqed{\rlap{\qquad\qedsymbol}}
\opn\dis{dis}
\def\pnt{{\raise0.5mm\hbox{\large\bf.}}}
\opn\Lex{Lex}
\begin{document}

\title{Filtered colimits of complete intersection  algebras.}

\author{ Dorin Popescu}

\address{Simion Stoilow Institute of Mathematics of the Romanian Academy,
Research unit 5, P.O. Box 1-764, Bucharest 014700, Romania,}

\address{University of Bucharest, Faculty of Mathematics and Computer Science
Str. Academiei 14, Bucharest 1, RO-010014, Romania,}

\address{ Email: {\sf dorin.m.popescu@gmail.com}}

\dedicatory{In the memory  of Lucian B\u adescu.}

\begin{abstract} This is mainly a small exposition on extensions of valuation rings. \\
\vskip 0.3 cm
{\it Key words }: immediate extensions,smooth morphisms, Henselian rings, complete intersection algebras, pseudo convergent sequences, pseudo limits.   \\
 {\it 2020 Mathematics Subject Classification: Primary 13F30, Secondary 13A18,13F20,13B40.}
\end{abstract}

\maketitle
\section{Introduction}

After Zariski \cite{Z} a valuation ring V containing a field $K$ of characteristic zero is a filtered union of its smooth $K$-subalgebras. When $V$ contains a field $K$ of positive characteristic this result may fail, as we believe. A possible support for this 
idea is given by Example \ref{e}.

Then we ask if $V$ is a filtered union of its complete intersection $K$-subalgebras essentially of finite type. This could hold as our Theorem \ref{T'} hints.
 We remind that for a given complete intersection local ring $(R,\mm,k)$ a minimal free resolution over $R$ of the residue field $k$ is described in \cite[Theorem 4]{Ta} and others as 
 \cite[Theorem 2.7]{As}, \cite[§6]{Av}, \cite[Proposition 1.5.4]{GL},  \cite[§3]{HM}, \cite[ Theorem 2.5]{AHS}, \cite[Theorem 4.1]{NV}. As Oana Veliche asked, we wonder if  a minimal free resolution of the residue field of $V$ could be described using somehow our Theorem \ref{T'}.

We owe thanks to Oana Veliche and the  referee who hinted some misprints in our paper.

\section{Henselian rings and smooth algebras}

A {\em Henselian} local ring is a local ring 
 $(A,\mm)$ for which the Implicit Function Theorem hods, that is 
 for every system of polynomials  $f=(f_1,\ldots ,f_r) $ of $A[Y]$,
$Y=(Y_1,\ldots,Y_N)$, $N\geq r$ over  $A$ and every solution  $y=(y_1,\ldots,y_N)\in A^N$     modulo $\mm$ of $f$ in   $A$ such that an $r\times r$-minor $M$ of Jacobian matrix $(\partial f_i/\partial Y_j)_{1\leq i\leq r,1\leq j\leq N}$  satisfies $M(y)\not \in \mm$ there exists a solution $\tilde y$ of $f=0$ in  $A$ with $y\equiv {\tilde y}$ modulo $\mm$.

An  $A$-algebra $B$ is  {\em
smooth} if $B$ is a localization of an $A$-algebra of type  $(A[Y]/(f))_M$, where  $f=(f_1,\ldots ,f_r) $  is a system of polynomials
in  $Y=(Y_1,\ldots,Y_N)$, $N\geq r$ over  $A$ and  $M$ is an $r\times r$-minor of $(\partial f/\partial Y)$. Thus  $(A,\mm)$ is Henselian if for any smooth $A$-algebra $B$  any $A$-morphism $B\to A/\mm$ can be lifted to an $A$-morphism $w:B\to A$. Thus  $w$ is a retraction of the canonical morphism $u:A\to B$, that is $wu=1_A$. A retraction of $A\to B$ maps the solutions of a system of polynomials $g=(g_1,\ldots,g_s)\in A[Y]^s$ in $B$ in solutions of $g$ in $A$.

We remind that a  filtered colimit  is a limit indexed by a small category that is filtered
  (see \cite[002V]{SP} or \cite[04AX]{SP}). A filtered  union is a filtered direct limit in which all objects are subobjects of the final colimit,
   so that in particular all the transition arrows are monomorphisms.

\begin{Proposition} \label{p} Let $(A,\mm)\to (A',\mm')$ be a morhism of local rings with $A/\mm\iso A'/\mm'$, which is a filtered colimit of smooth $A$-algebras and 
 $g\in A[Y]^s$  a system of polynomials which has  solutions in $A'$. If $(A,\mm)$ is Henselian, then $g$ has also solutions in $A$.
 \end{Proposition}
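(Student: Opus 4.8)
The plan is to push the solution down from the colimit $A'$ to one of the smooth $A$-algebras occurring in the filtered system, and then to retract that algebra onto $A$ using the Henselian hypothesis.

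First I would write $A'=\varinjlim_{i\in I}B_i$ as a filtered colimit of smooth $A$-algebras, with canonical maps $\phi_i\colon B_i\to A'$ and transition maps $\phi_{ij}\colon B_i\to B_j$ for $i\le j$, all of them $A$-algebra homomorphisms. Given a solution $y'=(y'_1,\dots,y'_N)\in (A')^N$ of $g$, filteredness of $I$ and finiteness of $N$ let me find an index $i$ and a tuple $y^{(i)}\in B_i^N$ with $\phi_i(y^{(i)})=y'$. For each $k\le s$ the element $g_k(y^{(i)})\in B_i$ maps to $g_k(y')=0$ in the colimit $A'$, hence is annihilated by some transition map $\phi_{ij_k}$; using filteredness once more I would choose a single $j\ge i$ above all the $j_k$ and put $B:=B_j$ and $y^{(j)}:=\phi_{ij}(y^{(i)})$. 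Then $g_k(y^{(j)})=\phi_{ij}(g_k(y^{(i)}))=0$ for every $k$, so $g$ already has a solution in the smooth $A$-algebra $B$.

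Next I would build a retraction $B\to A$. Composing $\phi_j\colon B\to A'$ with the quotient $A'\to A'/\mm'$ and with the inverse of the given isomorphism $A/\mm\iso A'/\mm'$ — this is legitimate because $A\to A'$ is a local morphism inducing that very isomorphism — yields an $A$-morphism $\bar w\colon B\to A/\mm$. Since $B$ is smooth over $A$ and $(A,\mm)$ is Henselian, $\bar w$ lifts to an $A$-morphism $w\colon B\to A$, that is, a retraction of $A\to B$. Finally I would apply $w$ to the solution $y^{(j)}\in B^N$: since $w$ is an $A$-algebra homomorphism, $g_k(w(y^{(j)}))=w(g_k(y^{(j)}))=w(0)=0$ for all $k$, so $w(y^{(j)})\in A^N$ is the desired solution of $g$ in $A$.

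The step that needs genuine care is the descent in the second paragraph: one must check that a solution living in the colimit genuinely comes from a solution at a finite stage of the system. This rests on the finiteness of the tuple $y'$ and of the number $s$ of equations together with filteredness of the index category — filteredness is precisely what lets one realize all the $y'_l$ and kill all the $g_k(y^{(i)})$ at a single index. Everything after that is a direct invocation of the Henselian lifting property recalled just before the statement, so I expect no further obstacle.
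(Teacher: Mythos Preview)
Your proof is correct and follows exactly the approach sketched in the paper: descend the solution from the colimit $A'$ to a smooth finite-stage algebra $B$, then use the Henselian property to produce a retraction $B\to A$ carrying that solution into $A$. The paper's own proof is a one-sentence version of precisely this argument; you have simply made explicit the filtered-colimit bookkeeping and the construction of the $A$-morphism $B\to A/\mm$ that gets lifted.
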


\begin{proof}
    Note that a solution $y$ in $A'$ of a system of polynomials $g$ over $A$ comes from a solution $y'$ of $g$ in a smooth $A$-algebra $B$, which is mapped in a solution of $g$ in $A$ by a retraction of $A\to B$.
\end{proof}   
 
  Examples of morphisms  $u:(A,\mm)\to (A',\mm')$, which are filtered colimits of smooth $A$-algebras
are given by the following theorems.  

The following result  is the General Neron Desingularization \cite{Po0}, \cite{Po0'}, \cite{S}.  

 \begin{Theorem} \label{T0}   Let  $u:(A,\mm)\to (A',\mm')$ be a flat morphism of Noetherian local rings. The following statements are equivalent:
 \begin{enumerate}
 \item $u$ is  a filtered colimit of smooth $A$-algebras.
 
 \item $u$ is regular, that is for every prime ideal $\pp\in \Spec A$ and every finite field extension
$K$ of the fraction field of $A/\pp$ the ring $K\otimes_{A/\pp} A'$ is regular.
 \end{enumerate}
 \end{Theorem}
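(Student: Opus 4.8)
The implication $(1)\Rightarrow(2)$ is soft: a smooth algebra is flat with geometrically regular fibres, flatness is preserved under filtered colimits, and for the fibres one observes that, for $\pp\in\Spec A$ and $K$ a finite extension of $\Frac(A/\pp)$, the ring $K\otimes_{A/\pp}A'$ is Noetherian (it is a finite module over a localization of $A'/\pp A'$) and is the filtered colimit of the $K$-algebras $K\otimes_{A/\pp}B_i$ attached to the smooth $B_i$ whose colimit is $A'$; these are smooth, hence regular, over $K$, and a Noetherian ring that is a filtered colimit of regular rings along flat transition maps is regular. So I would dispose of $(1)\Rightarrow(2)$ in a few lines and concentrate on $(2)\Rightarrow(1)$, which is the General Neron Desingularization theorem.

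The plan for $(2)\Rightarrow(1)$ begins with the standard reformulation: it suffices to show that every $A$-algebra $B$ of finite type carrying an $A$-morphism $\phi\colon B\to A'$ admits a factorization $B\to C\to A'$ with $C$ a smooth $A$-algebra of finite type; passing to the filtered colimit over the finite-type smooth $A$-algebras mapping to $A'$ then realizes $A'$ itself as such a colimit. So I fix a presentation $B=A[Y]/I$ with $Y=(Y_1,\dots,Y_n)$, $I=(f_1,\dots,f_m)$. The central object is the Jacobian, or smoothing, ideal $H_{B/A}\subseteq B$ in the sense of Lipman and Elkik, generated by the products $M\cdot\big((\mathbf f):I\big)$ with $\mathbf f$ ranging over the size-$r$ subsets of $\{f_1,\dots,f_m\}$ and $M$ over the $r\times r$ minors of $\partial\mathbf f/\partial Y$: its vanishing locus is exactly where $\Spec B\to\Spec A$ fails to be smooth. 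Hence, if after replacing $B$ by an enlargement through which $\phi$ still factors we can arrange $\phi(H_{B/A})\not\subseteq\mm'$, then $\phi$ factors through a principal localization of $B$ that is smooth over $A$, and we are finished. So the whole problem concentrates in the \emph{desingularization step}: given $\phi$ with $\phi(H_{B/A})\subseteq\mm'$, construct a finite-type factorization $B\to B'\to A'$ of $\phi$ along which a chosen numerical invariant strictly drops, so that finitely many iterations reach the case $\phi(H_{B'/A})\not\subseteq\mm'$.

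For the desingularization step I would follow the classical route. First reduce, via base change and normalization, to the case where $A$ is a normal domain and one argues over $\Frac(A)$; next treat the model case of a \emph{nice complete intersection}, in which part of $I$ is generated by a sequence $\mathbf h=(h_1,\dots,h_r)$ that is regular in the relevant localizations, the associated $r\times r$ minor $M$ of $\partial\mathbf h/\partial Y$ controls the failure of smoothness, and $(\mathbf h)$ and $I$ agree up to radical where it matters. In that model case one carries out Neron's trick: adjoin new polynomial variables for elements of $A'$ witnessing the regularity of the fibres of $u$ --- say a regular system of parameters of the pertinent fibre, lifted to $A'$ --- together with new defining relations chosen so that the Jacobian ideal of the enlarged algebra acquires a unit under $\phi$; then reduce the general case to this model case by induction on invariants such as $\dim A'$, the minimal number of generators of $I$, and the height of $\Ker\phi$. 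Regularity of $u$ enters here essentially and non-circularly: it is precisely what supplies, inside the fibres $K\otimes_{A/\pp}A'$, the regular sequences and parameter systems that the trick consumes, and the freedom to pass to finite extensions $K$ is what lets the argument survive imperfect residue fields and mixed characteristic.

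The main obstacle is this desingularization step and, within it, the bookkeeping: arranging the induction so that the ``nice complete intersection'' base case is always reachable, checking that the new relations defining $B'$ can be chosen compatibly with the given map to $A'$, and verifying that the chosen invariant genuinely decreases --- all while invoking only regularity of the fibres, never Noetherian hypotheses on auxiliary objects built from $A'$ (which may fail) nor the conclusion itself. Everything around it --- the reductions, the soft direction, and the final passage from $\phi(H_{B/A})\not\subseteq\mm'$ to an actual smooth factorization of $\phi$ --- is routine by comparison.
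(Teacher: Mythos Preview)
The paper does not prove Theorem~\ref{T0}: it is stated as the General N\'eron Desingularization and attributed to \cite{Po0}, \cite{Po0'}, \cite{S}, with no argument given in the text. So there is no ``paper's own proof'' to compare against; the theorem functions here purely as a quoted result.

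Your outline is a faithful summary of the classical Popescu--Swan approach and would be an appropriate sketch were a proof required, but for the purposes of this paper the correct ``proof'' is simply a citation. In that sense your proposal is not wrong, but it supplies far more than the paper does or intends.
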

 
Let $(A,\mm)$ be a Noetherian local ring  and $ {\hat A}$ its completion in the $\mm$-adic topology. $A$ is {\em excellent} if the completion map $A\to {\hat A}$ is regular. The following corollary holds by Proposition \ref{p}. 
  
  \begin{Corollary} (\cite{Po0}) Let  $(A,\mm)$ be an excellent  Henselian local ring  and $ {\hat A}$ its completion. Then every system of polynomials over $A$ which has a solution in ${\hat A} $ has also one in $A$. In particular $A$ has the property of Artin approximation; this was conjectured by M. Artin in \cite{A}.
  \end{Corollary}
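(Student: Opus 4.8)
The plan is to combine Theorem \ref{T0} with Proposition \ref{p}. First I would observe that, by the very definition of excellence, the completion map $A\to{\hat A}$ is a regular morphism; it is also well known to be flat, and both $A$ and ${\hat A}$ are Noetherian local rings with $\mm\hat A=\mm_{\hat A}$ and $A/\mm\iso{\hat A}/\mm_{\hat A}$. Hence $A\to{\hat A}$ satisfies condition (2) of Theorem \ref{T0}, and therefore it satisfies condition (1): the completion map is a filtered colimit of smooth $A$-algebras.

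Next I would apply Proposition \ref{p} directly with $A'={\hat A}$ and $\mm'=\mm\hat A$. Since $A$ is assumed Henselian and $A\to{\hat A}$ is a filtered colimit of smooth $A$-algebras by the previous paragraph, the Proposition tells us that any system of polynomials $g\in A[Y]^s$ which has a solution in ${\hat A}$ already has a solution in $A$. This is exactly the first assertion.

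For the ``in particular'' clause, I would recall what the Artin approximation property asks: given $f\in A[Y]^r$, a solution $\hat y\in{\hat A}^N$ of $f=0$, and an integer $c$, one wants a solution $y\in A^N$ of $f=0$ with $y\equiv\hat y$ modulo $\mm^c$. This follows from the first assertion by the standard trick of absorbing the congruence condition into the system: write $\hat y = a + \hat z$ with $a\in A^N$ chosen so that $a\equiv\hat y\bmod\mm^c$ (possible since $A/\mm^c\iso{\hat A}/\mm^c\hat A$), pick generators $m_1,\dots,m_t$ of $\mm^c$, and consider the enlarged system in unknowns $Y$ and $Z=(Z_{ij})$ consisting of $f(Y)=0$ together with $Y - a - (\sum_j m_j Z_{1j},\dots,\sum_j m_j Z_{Nj}) = 0$; a solution of this enlarged system in ${\hat A}$ is given by $\hat y$ and suitable $\hat z$-coordinates, so by the first part it has a solution in $A$, whose $Y$-component is the desired approximate solution.

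The only real subtlety—hardly an obstacle—is checking the hypotheses of Theorem \ref{T0} and Proposition \ref{p} carefully: flatness of $A\to{\hat A}$ for $A$ Noetherian, the isomorphism of residue fields, and the fact that ``$A$ excellent'' is precisely the regularity of the completion map (one should note excellence is usually phrased as formal smoothness of all completions, but for the present purpose regularity of $A\to\hat A$ is the relevant and equivalent input). Everything else is formal, so the corollary follows. I would then remark that this recovers M. Artin's conjecture from \cite{A}, as stated.
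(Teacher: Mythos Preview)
Your proposal is correct and follows exactly the paper's approach: the paper simply says the corollary ``holds by Proposition \ref{p}'', and you have spelled out the implicit step of invoking Theorem \ref{T0} (using the paper's definition of excellence as regularity of $A\to\hat A$) before applying Proposition \ref{p}. Your additional paragraph deriving the congruence version of Artin approximation from the bare existence statement is a standard reduction that the paper omits; one small quibble is that your parenthetical about excellence being ``usually phrased as formal smoothness of all completions'' is not quite the standard formulation (it is geometric regularity of formal fibers), but this does not affect the argument since the paper has already defined excellence as regularity of $A\to\hat A$.
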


\begin{Theorem} \label{T1'}  (Zariski \cite{Z}) A valuation ring containing a field $K$ of characteristic zero is a    a filtered union of its smooth $K$-subalgebras.
\end{Theorem}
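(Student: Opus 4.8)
The plan is to produce, for every finite subset $F$ of a valuation ring $V$ with fraction field extending $K$ (with $\chara K = 0$), a smooth $K$-subalgebra $V_F$ of $V$ containing $F$, and to check that these form a filtered system whose union is $V$. First I would reduce to the case where $F = \{a_1,\dots,a_n\}$ generates a subfield: replace $F$ by the finitely many elements needed and pass to the $K$-subalgebra $A_0 = K[a_1,\dots,a_n] \subseteq V$, with its valuation $\nu$ restricted from $V$. The domain $A_0$ is a finitely generated $K$-algebra, hence Noetherian and of finite Krull dimension; let $\pp = \ker(A_0 \to V/\mm_V)$ be the center of $\nu$ on $A_0$, so that $(A_0)_\pp \subseteq V$ is dominated by $V$. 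The localization $(A_0)_\pp$ is a Noetherian local domain whose fraction field is finitely generated over $K$.

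The key step is to resolve the singularities of $(A_0)_\pp$ without leaving $V$. Because $\chara K = 0$, one may apply resolution of singularities (Hironaka), or more elementarily the local uniformization theorem of Zariski, to the local ring $(A_0)_\pp$ with respect to the valuation $\nu$: there is a finitely generated $K$-subalgebra $A_1$ of $V$ with $A_0 \subseteq A_1 \subseteq V$, a prime $\pp_1$ of $A_1$ lying over $\pp$ and equal to the center of $\nu$, such that $(A_1)_{\pp_1}$ is a regular local ring still dominated by $V$. A regular local ring that is a localization of a finitely generated $K$-algebra, $\chara K = 0$, is a localization of a smooth $K$-algebra of finite type — one sees this by choosing a presentation $A_1 = K[Y]/I$ and using that, at $\pp_1$, the Jacobian criterion is satisfied, so after inverting one suitable minor the algebra becomes smooth over $K$. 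Call the resulting smooth $K$-algebra $B_F$; then $V_F := (B_F)_{\text{(center of }\nu)}$ is a smooth $K$-subalgebra of $V$ containing $F$.

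Finally I would verify the filtered-union structure. Given two finite subsets $F, F'$ of $V$, apply the construction to $F \cup F'$ (together with enough extra elements to absorb both $V_F$ and $V_{F'}$) to obtain a smooth $K$-subalgebra containing both $V_F$ and $V_{F'}$; the transition maps are inclusions, hence monomorphisms, so the system is genuinely a filtered union in the sense recalled before Proposition \ref{p}. Since every element of $V$ lies in some $F$, the union of all the $V_F$ is all of $V$, giving $V = \varinjlim_F V_F$ with all $V_F$ smooth over $K$.

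The main obstacle is the local uniformization / resolution input used to pass from $(A_0)_\pp$ to a regular (hence, in characteristic zero, smooth) local $K$-subalgebra of $V$ while keeping the center of the valuation under control; this is exactly where $\chara K = 0$ is essential and where the statement fails in positive characteristic (cf.\ Example \ref{e}). The remaining points — finite generation, the Jacobian criterion turning "regular over a field of characteristic zero" into "smooth after a localization," and the cofinality bookkeeping for the filtered system — are routine.
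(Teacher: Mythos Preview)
Your plan is correct and is exactly the standard derivation of this statement from Zariski's local uniformization theorem. Note, however, that the paper does not actually give a proof of Theorem~\ref{T1'}: it simply states the result and attributes it to Zariski~\cite{Z}. So there is no in-paper argument to compare against beyond the implicit reference to local uniformization, which is precisely the input you invoke.

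One small point worth tightening in your write-up: when you pass from $(A_0)_\pp$ to a regular $(A_1)_{\pp_1}$ you need $(A_0)_\pp \subseteq (A_1)_{\pp_1}$, not merely that both sit inside $V$. This is guaranteed if you obtain $A_1$ by a proper birational modification of $\Spec A_0$ (blow-ups, as in Zariski's or Hironaka's procedure): the valuative criterion gives a unique center $\pp_1$ lying over $\pp$, and properness forces $\mathcal{O}_{X',\pp_1}$ to dominate $(A_0)_\pp$. With that clarification the cofinality argument goes through, and the passage from ``regular, essentially of finite type over a characteristic-zero field'' to ``smooth after inverting a Jacobian minor'' is indeed routine.
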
  

In the same frame, the following result holds.     
     
\begin{Theorem}(\cite[Theorem 2]{P1}) \label{T1} Let $V\subset V'$ be an extension of valuation rings containing $\bf Q$,            $K\subset K'$ its fraction field extension, $\Gamma\subset \Gamma'$ the value group extension of $V\subset V'$  and $\val: K'^{*}\to \Gamma'$ the valuation of $V'$. Then $V'$  is a filtered  colimit of smooth $V$-algebras if and only if  the following statements hold:
\begin{enumerate}

\item For each $q\in \Spec V$ the ideal $qV'$ is prime,

\item  For any prime ideals $q_1,q_2\in \Spec V$ such that $q_1\subset q_2$ and height$(q_2/q_1)=1$  and any $x'\in q_2V'\setminus q_1'$ there exists $x\in V$ such that $\val(x')=\val(x)$,  where $q_1'\in \Spec V'$ is the prime ideal corresponding to the maximal ideal of $V_{q_1}\otimes_V V'$, that is the maximal prime ideal of $V'$ lying on $q_1$.
\end{enumerate}
\end{Theorem}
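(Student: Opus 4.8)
The statement is a Néron‑desingularization result for the (non‑Noetherian) valuation ring $V$, so Theorem \ref{T0} cannot be quoted directly and one must argue closer to the ground. The plan for the ``if'' direction is to show that every finitely generated $V$‑subalgebra of $V'$ is contained in a smooth $V$‑subalgebra of $V'$, so that $V'$ is the filtered union of those; for the ``only if'' direction, $(1)$ and $(2)$ are read off as obstructions. As the keywords of the abstract indicate, the real obstacle behind $(2)$ is that $V\subseteq V'$ must not contain any ``ramified‑over‑$V$'' piece — a would‑be $n$‑th root of a uniformizer, or more generally a pseudo‑limit of a pseudo‑convergent sequence — because such a piece is never produced by smooth $V$‑algebras; condition $(1)$ rules out non‑reduced fibres.

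\emph{Only if.} Write $V'=\varinjlim B_i$ with each $B_i$ smooth over $V$. For $(1)$, fix $q\in\Spec V$: since $V/q$ is a domain, each $B_i/qB_i$ is smooth over a domain, hence reduced, so $V'/qV'=\varinjlim B_i/qB_i$ is reduced; thus $qV'$ is a radical ideal of the valuation ring $V'$, and a radical ideal of a valuation ring is prime (it is the smallest prime above it, the primes forming a chain). For $(2)$ I would first pass to $V/q_1\to V'/q_1V'$ (again a filtered colimit of smooth algebras by base change, and satisfying $(1)$ by the previous step) and then localize at $q_2$, reducing to: $V$ of rank one, $x'\in q_2V'\setminus q_1'$ with $q_1'$ the largest prime of $V'$ over $(0)$; then $\val(x')$ lies in the convex hull $\Delta'$ of $\Gamma$ inside $\Gamma'$ (which, $\Gamma$ being rank one, is itself rank one), and one must prove $\val(x')\in\Gamma$. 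If not, then $n\val(x')\in\Gamma$ for some $n\geq 2$, so $(x')^n/a$ is a unit of $V'$ for a suitable $a\in V$ (with $\val(a)=n\val(x')$); choosing $B_j\ni x',(x')^n/a$ one finds inside the smooth algebra $B_j$ a sub‑$V$‑algebra of the shape $V[X,U]/(aU-X^{n})$, which — as $a\in\mm_V$ and $n$ is a unit in the residue field — is singular over $V$ along $X=0$ in the special fibre and cannot be smoothed while keeping all valuations $\geq 0$: the contradiction. Making this last step rigorous (this is where pseudo‑convergent sequences enter) is the delicate point of this direction.

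\emph{If.} Let $C=V[Y]/I\subseteq V'$ be a finitely generated $V$‑subalgebra, $Y=(Y_1,\dots,Y_n)$ and $I=\ker(V[Y]\to V')$, so $I\cap V=0$. As in General Néron Desingularization, the non‑smooth locus of $C$ over $V$ along the image of $\Spec V'$ is cut out by a Jacobian‑type ideal of $C$; the plan is to enlarge $C$ inside $V'$ by adjoining finitely many fractions $a'/b'$ of elements of $V'$ of controlled valuation, together with auxiliary variables, so as to land in an everywhere‑smooth $V$‑subalgebra of $V'$ still containing $C$. Here $(1)$ keeps the primes of $V'$ contracted from $V$ as tame as in the Noetherian case, so that the elements one must invert do not make components collide, and $(2)$ supplies, for every valuation‑theoretic gap that has to be bridged, an $x\in V$ with $\val(x)$ equal to a prescribed value coming from $V'$ — exactly what is needed to clear denominators by scalars of $V$ and keep the construction defined over $V$. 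The hypothesis $\QQ\subseteq V$ is used both to make the relevant Jacobian locus a well‑behaved ideal (no Frobenius pathologies) and to make residue‑field extensions automatically separable. Carrying out this desingularization over a genuine, non‑Noetherian valuation ring is the main obstacle of the whole proof.
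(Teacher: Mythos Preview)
The paper does not contain a proof of this theorem. Theorem~\ref{T1} is stated with the attribution ``\cite[Theorem 2]{P1}'' and is quoted as a result from the reference \cite{P1}; the present paper is an expository survey and uses Theorem~\ref{T1} only as input for Corollaries~\ref{c0} and~\ref{C0}. There is therefore nothing in this paper to compare your proposal against.

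That said, a few remarks on your sketch. Your argument for $(1)$ in the ``only if'' direction is fine: smoothness base-changes, filtered colimits of reduced rings are reduced, and radical ideals of a valuation ring are prime. Your argument for $(2)$, however, has a real gap beyond the one you flag. The sentence ``If not, then $n\val(x')\in\Gamma$ for some $n\geq 2$'' presupposes that $\Gamma'/\Gamma$ is torsion (after the rank-one reduction), which is not given and does not follow from the smooth-colimit hypothesis alone; you would first need to argue this, and it is not obvious. Moreover, even granting it, the step ``inside the smooth algebra $B_j$ one finds a sub-$V$-algebra of the shape $V[X,U]/(aU-X^{n})$, which \dots\ cannot be smoothed'' does not yield a contradiction: smoothness is not inherited by subalgebras, so the singularity of that subalgebra says nothing about $B_j$. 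One has to argue differently, and this is indeed where the pseudo-convergent-sequence machinery from \cite{P1} is used.

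For the ``if'' direction your outline is at the right level of generality (enlarge a finitely generated $V$-subalgebra of $V'$ to a smooth one using $(1)$ to control fibres and $(2)$ to supply scalars of the needed value), but as you yourself note, all the content lies in actually carrying out that desingularization over a non-Noetherian base; nothing in your text goes beyond stating the plan. If you want to turn this into a proof you will need to consult \cite{P1} directly.
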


An {\it{immediate extension}} of valuation rings is an extension inducing trivial extensions on residue fields and group value extensions. As a  consequence  of Theorem \ref{T1} we can prove the following:

\begin{Corollary}(\cite[Theorem 21]{P})\label{c0} If $V\subset V'$ is an immediate extension of valuation rings   containing $\bf Q$
then $V'$ is a filtered colimit of smooth $V$-algebras. 
\end{Corollary}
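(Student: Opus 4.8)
The plan is to check the two numbered conditions in Theorem~\ref{T1} and then apply that theorem. Since $V\subset V'$ is immediate, the inclusion induces an isomorphism of value groups; write $\Gamma$ for this common value group, and recall that $\val$ restricted to $K^*$ realizes $\Gamma$ as the value group of $V$ as well. Recall also that the prime ideals of a valuation ring are linearly ordered and correspond, in an order-reversing way, to the convex subgroups of its value group: to a convex subgroup $\Delta\subseteq\Gamma$ one attaches the prime $\{x:\val(x)\notin\Delta\}\cup\{0\}$, and this description applies to both $V$ and $V'$ because they have the same value group.

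\emph{Condition (1).} Fix $q\in\Spec V$ and let $\Delta\subseteq\Gamma$ be the associated convex subgroup, so that $q=\{x\in V:\val(x)\notin\Delta\}\cup\{0\}$. I would show that $qV'=\{y\in V':\val(y)\notin\Delta\}\cup\{0\}$, which is a prime ideal of $V'$, and hence deduce (1). The inclusion $\subseteq$ follows from convexity of $\Delta$: an element of $qV'$ is a sum $\sum a_ix_i$ with $x_i\in q$, $a_i\in V'$, so its value is $\ge\val(x_j)\notin\Delta$ for some $j$, hence itself $\notin\Delta$. For $\supseteq$, given $y\in V'$ with $\val(y)\notin\Delta$, pick $x\in V$ with $\val(x)=\val(y)$ (possible since the value group of $V'$ is again $\Gamma$); then $y/x$ is a unit of $V'$, so $y\in xV'\subseteq qV'$. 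The same computation gives $qV'\cap V=q$, so $qV'$ is the unique prime of $V'$ contracting to $q$; in particular the prime $q_1'$ occurring in Theorem~\ref{T1}(2) is exactly $q_1V'$.

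\emph{Condition (2).} Let $q_1\subset q_2$ in $\Spec V$ with $\height(q_2/q_1)=1$, and let $x'\in q_2V'\setminus q_1'$. By the previous paragraph $q_2V'$ and $q_1'=q_1V'$ are the primes of $V'$ attached to convex subgroups $\Delta_2\subseteq\Delta_1$ of $\Gamma$; thus $x'\neq0$, $\val(x')>0$, and $\val(x')\notin\Delta_2$. Since $\Gamma$ is also the value group of $V$, there is $x\in V$ (necessarily in the maximal ideal) with $\val(x)=\val(x')$, which is precisely what condition (2) requires.

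With (1) and (2) verified, Theorem~\ref{T1} gives that $V'$ is a filtered colimit of smooth $V$-algebras. There is no real obstacle in this argument: immediacy forces the equality of value groups, and once the dictionary between prime ideals and convex subgroups is set up both hypotheses of Theorem~\ref{T1} become automatic. The only points that need care are the order-reversal in that dictionary and the identification $q_1'=q_1V'$, which is why the statement is only a corollary of Theorem~\ref{T1}.
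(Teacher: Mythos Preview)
Your proposal is correct and follows exactly the route the paper intends: the corollary is stated right after Theorem~\ref{T1} with the remark that it is a consequence of that theorem, and your argument simply spells out this deduction by verifying that conditions (1) and (2) are automatic once the value groups coincide. The only content you add beyond the paper is the routine dictionary between primes and convex subgroups and the identification $q_1'=q_1V'$, which is the natural way to make the one-line deduction precise.
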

 
 The following corollary holds by Proposition \ref{p}. 
\begin{Corollary}(\cite[Proposition 18]{P1})\label{C0} Let $V\subset V'$ be an extension of valuation rings containing $\bf Q$ with the same residue field, $K\subset K'$ its fraction field extension, $\Gamma\subset \Gamma'$ the value group extension of $V\subset V'$  and $\val: K'^{*}\to \Gamma'$ the valuation of $V'$. Assume that $V$ is Henselian and the statements (1), (2) of Theorem \ref{T1} hold (for example when $V\subset V'$ is immediate).  Then every system of polynomials over $V$ which has a solution in $V' $ has also one in $V$.
\end{Corollary}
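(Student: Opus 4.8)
The proof is meant to be a direct concatenation of Theorem~\ref{T1} and Proposition~\ref{p}. First I would observe that the hypotheses (1) and (2) assumed here are verbatim conditions (1) and (2) of Theorem~\ref{T1}, so by that theorem the inclusion $u\colon V\hookrightarrow V'$ is a filtered colimit of smooth $V$-algebras. It then remains to check that $u$ meets the standing hypotheses of Proposition~\ref{p}: that it is a local homomorphism of local rings inducing an isomorphism on residue fields. Locality is automatic in this setting, since for $x\in V$ one has $x\in\mm'$ iff $\val(x)>0$ iff $x\in\mm$, so $\mm'\cap V=\mm$; and the induced map $V/\mm\to V'/\mm'$ is an isomorphism by the hypothesis that $V$ and $V'$ have the same residue field. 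Since $V$ is Henselian by assumption, Proposition~\ref{p} now applies and gives exactly the conclusion: any system $g\in V[Y]^s$ having a solution in $V'$ has one in $V$.

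For the parenthetical assertion that an immediate extension $V\subset V'$ automatically satisfies (1) and (2), I would argue as follows. Since the extension is immediate, $\Gamma=\Gamma'$, so for every $x'\in V'$ we have $\val(x')\in\Gamma$; as $\val$ restricted to $K^{*}$ is onto $\Gamma$ there is $x\in V$ with $\val(x)=\val(x')$, which yields (2) (for any choice of $q_1\subset q_2$ as in the statement). For (1), one checks that $qV'$ is prime for each $q\in\Spec V$; this is classical for immediate extensions and is in any case covered by the references underlying Corollary~\ref{c0}.

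There is essentially no obstacle internal to this corollary: all the substance has been front-loaded into Theorem~\ref{T1} (the equivalence between the validity of conditions (1) and (2) and $V'$ being a filtered colimit of smooth $V$-algebras) and into Proposition~\ref{p} (the lifting of solutions along such colimits over a Henselian base). The only mild point to be careful about is that one is genuinely in the local, equal-residue-field situation required by Proposition~\ref{p}, which we verified above; everything else is formal.
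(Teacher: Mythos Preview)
Your proposal is correct and matches the paper's own argument exactly: the paper simply says the corollary ``holds by Proposition~\ref{p}'', with Theorem~\ref{T1} supplying the filtered-colimit-of-smooth-algebras hypothesis, and you have spelled out precisely this. Your extra verification of locality and of the parenthetical immediate case is more detail than the paper gives (for the immediate case the paper relies on Corollary~\ref{c0} rather than checking (1) and (2) directly), but the logical route is the same.
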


Let $V$ be a valuation ring, $\lambda$ be a fixed limit ordinal  and $v=\{v_i \}_{i < \lambda}$ a sequence of elements in $V$ indexed by the ordinals $i$ less than  $\lambda$. Then $v$ is called \emph{pseudo convergent} if 

$\val(v_{i} - v_{i''} ) < \val(v_{i'} - v_{i''} )     \ \ \mbox{for} \ \ i < i' < i'' < \lambda$
(see \cite{Kap}, \cite{Sch}).
A  \emph{pseudo limit} of $v$  is an element $w \in V$ with 

$ \val(w - v_{i}) < \val(w - v_{i'}) \ \ \mbox{(that is,} \ \ \val(w -  v_{i}) = \val(v_{i} - v_{i'}) \ \ \mbox{for} \ \ i < i' < \lambda$.

The following example shows that Corollary \ref{c0} fails in positive characteristic.
 
 \begin{Example} (\cite[Example 3.1.3]{Po1},\cite{O}, \cite{P-1}) \label{e}. Let $k$  be a field of characteristic $p>0$, $X$ a variable, $\Gamma={\bf Q}$  and $K$ the fraction field of the group algebra $k[\Gamma]$, that is the rational functions in $(X_q)_{q\in {\bf Q}}$. 
 Let $P$ be the field of all formal sums $z= \sum_{n\in {\bf N}} c_nX^{\alpha_n}$, 
 where $(\alpha_n)_{n\in {\bf N}}$ is a monotonically increasing sequence  from $\Gamma$ and $c_n\in k$. 
 Set $\val(z)=\alpha_s$, where $s=\min\{n\in {\bf N}: c_n\not=0\}$ if $z\not =0$
  and let $V$ be the valuation ring defined by $\val:P^*\to \Gamma$, $z\to \val(z)$. 
   
   Let $\rho_n=(p^{n+1}-1)/(p-1)p^{n+1}$, $y=-1+\sum_{n\geq 0} (-1)^n X^{\rho_n}$ and
    $a_i=-1+\sum_{0\leq n\leq i} (-1)^n X^{\rho_n}$,   
  We have $1+\rho_n=p(\rho_{n+1})$
   for $n\geq 0$ and 
    $p\rho_0=1$ and 
   $y$ is a pseudo limit of the pseudo convergent sequence
    $a=(a_i)_{i\in {\bf N}}$, which has no pseudolimit in $K$. Then $y$ is a root of the separable polynomial
     $g=Y^p+XY+1\in K[Y]$ and the algebraic separable extension $V_0=V \cap K\subset V_1=V \cap K(y)$ is not dense,
     in particular $V_1$ is not a filtered colimit of smooth $V_0$-algebras
      (apply \cite[Theorem 6.9]{Po1},
      or \cite[Theorem 2]{P-1}). 
\end{Example}

However, there exist an important result when char $k=p>0$. 

\begin{Theorem} \label{T2}
 (B. Antieau, R. Datta \cite[Theorem 4.1.1]{AD}) Every perfect valuation ring of characteristic $p>0$ is a filtered union of its smooth ${\bf F}_p$-subalgebras. 
 \end{Theorem}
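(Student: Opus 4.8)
The plan is to reduce the theorem to a single nontrivial ingredient, an inseparable local uniformization statement for the finitely generated subrings of $V$, namely: \emph{every finitely generated ${\bf F}_p$-subalgebra $A\subseteq V$ is contained in a finitely generated ${\bf F}_p$-subalgebra $A'\subseteq V$ that is regular at the center $\pp'=\{a\in A':\val(a)>0\}$ of $\val$ on $\Spec A'$.} The hypothesis that $V$ be perfect is precisely what makes this available here at no cost.

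\emph{Why the core step suffices.} Given such an $A'$, first $A'_{\pp'}\subseteq V$: for $s\in A'\setminus\pp'$ one has $\val(s)\ge 0$ (as $s\in V$) and $\val(s)\le 0$ (as $s\notin\pp'$), so $\val(s)=0$ and $a/s\in V$ for every $a\in A'$. Moreover $A'_{\pp'}$ is a localization of a smooth affine ${\bf F}_p$-algebra — regularity and smoothness over the perfect field ${\bf F}_p$ agreeing for finite type algebras — hence a smooth ${\bf F}_p$-algebra in the sense recalled in Section~2, and it contains $A$. These subalgebras form a directed family: for $B_i=(A'_i)_{\pp'_i}\subseteq V$ ($i=1,2$), choose a finitely generated $A\subseteq V$ with $A'_1,A'_2\subseteq A$ and apply the core step to $A$ to get $A'$; since $\pp'\cap A'_i=\pp'_i$ (both equal $\{c\in A'_i:\val(c)>0\}$), the set $A'_i\setminus\pp'_i\subseteq A'\setminus\pp'$ consists of units of $A'_{\pp'}$, so $B_i\subseteq A'_{\pp'}$. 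As every element of $V$ lies in some finitely generated ${\bf F}_p$-subalgebra, the $A'_{\pp'}$ cover $V$, and since all transition maps are inclusions this exhibits $V$ as a filtered union of smooth ${\bf F}_p$-subalgebras.

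\emph{Proof of the core step.} Put $F=\Frac(A)\subseteq K:=\Frac(V)$ and $v=\val|_{F^{*}}$, with valuation ring $\OO_v=V\cap F\supseteq A$, which dominates $A_{\pp_A}$. Because $V$ is perfect, $K$ is perfect; hence any finite purely inseparable extension $F'$ of $F$ embeds over $F$ into $K$, the unique extension $v'$ of $v$ to $F'$ has valuation ring $\OO_{v'}=V\cap F'$, and every element of $F'$ of non-negative $v'$-value lies in $V$. Now apply inseparable local uniformization to the valued function field $(F,v)$ over the perfect field ${\bf F}_p$ (Temkin; or Knaf--Kuhlmann when $v$ is Abhyankar): there is a finite purely inseparable $F'/F$ for which $v'$ admits local uniformization dominating the model $A$; realized inside $K$, this yields a finitely generated ${\bf F}_p$-subalgebra $A'$ with $A\subseteq A'\subseteq\OO_{v'}\subseteq V$, $\Frac A'=F'$, and $A'$ regular at its center.

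\emph{Main obstacle.} The inseparable local uniformization invoked above is the only non-formal point, and it is exactly where perfectness of $V$ is used: it guarantees that the finite purely inseparable extension $F'$ needed to tame the possibly wild singularity of a model at the center of $v$ already lies inside $K=\Frac(V)$, with the relevant elements automatically landing in $V$, so that the customary "pass to a finite purely inseparable extension" in such theorems costs nothing here. A self-contained treatment would have to reprove this — presumably by inducting on the rank of $V$, reducing via the chain of primes of $V$ to the rank-one case, then on $\trdeg_{{\bf F}_p}F$, with the low-dimensional cases classical and the abundant $p$-power roots in $V$ driving the inseparable part — but I would simply cite it.
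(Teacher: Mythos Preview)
The paper does not prove this theorem; it merely quotes it with a citation to Antieau--Datta. There is therefore no in-paper proof to compare against. Your sketch is in fact the strategy of the cited source: reduce to showing that every finitely generated ${\bf F}_p$-subalgebra of $V$ is contained in one that is regular at the center of $\val$, and obtain the latter from Temkin's inseparable local uniformization, using perfectness of $V$ to absorb the requisite purely inseparable extension into $\Frac(V)$.

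Two small points worth tightening. First, in the core step you assert that the output $A'$ of local uniformization contains the given $A$; the usual statements only produce a regular local model dominated by $\OO_{v'}$, so you should say explicitly that one may then replace $A'$ by the ${\bf F}_p$-subalgebra generated by $A$ and $A'$ inside $V$ and localize further (the regular locus being open in a finite-type ${\bf F}_p$-scheme) to recover regularity at the center while retaining $A\subseteq A'$. Second, your claim that $A'_{\pp'}$ is smooth in the sense of Section~2 uses that the smooth locus of $\Spec A'$ over ${\bf F}_p$ is open and contains $\pp'$; this is fine, but it is what justifies writing $A'_{\pp'}$ as a localization of some $A'_f$ with $A'_f$ smooth, so it deserves a word. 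With these clarifications the argument is correct and matches Antieau--Datta's.
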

We mention also the following result.
\begin{Theorem}(\cite{P0}) \label{ultra} Let V be a one dimensional valuation ring with value group $\Gamma$ containing its residue field
and $U$ a set with
 card$(U)>$card$(\Gamma)$. Then there exists an ultrafilter $\mathcal U$ on $U$ such that taking the ultrapower $\tilde V$ of $V$ with respect to $\mathcal U$ and ${\bar V}=
 {\tilde V} / \cap_{z\in V,z\not =0} z {\tilde V }$ the following assertions hold:

\begin{enumerate} 
\item $\bar V$ contains the completion of $V$.
\item If $V'$ is an immediate extension of $V$ contained in $\bar V$ with $V'/V$ separable then $V'$ is a filtered  colimit of  smooth $V$-algebras.
\end{enumerate}
\end{Theorem}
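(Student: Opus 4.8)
The plan is to realize $\tilde V$ and $\bar V$ as explicit valued structures, to choose $\mathcal U$ so that $\tilde V$ is saturated enough, and then to read off (1) and (2) from saturation together with the known criteria for an extension of valuation rings to be a filtered colimit of smooth algebras.

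First, by \L o\'s's theorem $\tilde V=V^{U}/\mathcal U$ is a valuation ring whose value group is the ultrapower $\wt\Gamma=\Gamma^{U}/\mathcal U$ of the ordered group $\Gamma$ and whose residue field is $\wt k=k^{U}/\mathcal U$, and the diagonal map embeds $V$ in $\tilde V$ isometrically. One checks that the ideal $\pp:=\bigcap_{0\neq z\in V}z\tilde V$ equals $\{x\in\tilde V:\val(x)>\gamma\ \text{for every}\ \gamma\in\Gamma\}$, the prime of $\tilde V$ attached to the convex hull $H$ of $\Gamma$ inside $\wt\Gamma$; hence $\bar V=\tilde V/\pp$ is again a valuation ring, with value group $H$ and residue field $\wt k$, and since $\pp\cap V=0$ the composite $V\to\tilde V\surjects\bar V$ is an isometric embedding. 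Since $\mathrm{card}(U)>\mathrm{card}(\Gamma)=:\kappa$, I would take $\mathcal U$ to be a countably incomplete, $\kappa^{+}$-good ultrafilter on $U$ (Keisler--Kunen); then $\tilde V$ is $\kappa^{+}$-saturated in the language of rings, so it realizes every finitely satisfiable type over a parameter set of size $\le\kappa$. In particular $\bar V$ realizes over $V$ every finitely consistent family of at most $\kappa$ divisibility/associatedness conditions: such a family lifts along $\tilde V\surjects\bar V$ to a type over $V\subseteq\tilde V$ realized by some $\tilde w$, and the resulting values lie in $\Gamma\subseteq H$, hence persist in $\bar V$.

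For (1): the completion $\hat V$ is an immediate extension of $V$, and since $V$ has rank one ($\Gamma$ archimedean) every $w\in\hat V$ is the pseudo limit of a pseudo convergent sequence $(v_{i})_{i<\lambda}$ in $V$ whose breadth $(\val(v_{i+1}-v_{i}))_{i}$ is cofinal in $\Gamma$, with $\lambda$ of cofinality $\operatorname{cf}(\Gamma)\le\kappa$. By the previous step this sequence has a pseudo limit $\bar w\in\bar V$, and $\bar w$ is unique: two of them differ by an element whose value exceeds every element of $\Gamma$, hence lies in $\pp$ and is $0$ in $\bar V$. The assignment $w\mapsto\bar w$ is therefore a well-defined $V$-algebra homomorphism $\hat V\to\bar V$ (it respects the ring operations because limits do), and it is injective since distinct elements of $\hat V$ lie at finite distance; this gives $\hat V\hra\bar V$.

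For (2): let $V\subseteq V'\subseteq\bar V$ be immediate with $\Frac(V')/\Frac(V)$ separable. Writing $V'$ as the filtered union of the $V_{L}=V'\cap L$ over finite subextensions $L/\Frac(V)$ (a transcendental pseudo limit step, when present, being itself a filtered colimit of localizations of polynomial rings, hence of smooth algebras), it suffices to treat a finite separable $L=\Frac(V)(w)$ with $w$ a root of its minimal polynomial $g$ over $\Frac(V)$. Choosing a representative $\tilde w\in\tilde V$ of $w$ we get $g(\tilde w)\in\pp$, i.e.\ $\val(g(\tilde w))>\gamma$ for all $\gamma\in\Gamma$, so the type $\{\val(g(Y))>\gamma:\gamma\in\Gamma\}$ is realized in $\tilde V$ and hence finitely satisfiable in $V$; thus $\sup_{y\in V}\val(g(y))$ is cofinal in $\Gamma$, and since $g$ is separable this forces $g$ to have a root in $\hat V$, so that (after replacing $w$ by a conjugate root if necessary) $V\subseteq V_{L}$ is dense. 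By the criterion for immediate separable extensions to be filtered colimits of smooth algebras in this situation (\cite[Theorem~2]{P-1}, cf.\ \cite[Theorem~2]{P1} and the discussion around Example~\ref{e}), each $V\subseteq V_{L}$ is a filtered colimit of smooth $V$-algebras, and assembling over $L$ and using transitivity of ``filtered colimit of smooth algebras'' gives the claim for $V'$. The step I expect to be the real obstacle is exactly this passage from ``$w\in\bar V$'' to the usable dichotomy that the finite subextension is \emph{dense} over $V$ --- one must identify precisely which separable data over $V$ can live inside $\tilde V/\pp$ --- together with organizing the possibly transfinite tower of finite subextensions so that transitivity applies without circularity; the saturation input in (1) is essential but comparatively routine once the $\kappa^{+}$-good ultrafilter is fixed.
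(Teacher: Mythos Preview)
The paper does not prove this theorem at all: it is quoted from \cite{P0} with no argument given, so there is nothing in the text to compare your attempt against. I can therefore only assess your sketch on its own merits.

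Your setup and part~(1) are sound. Identifying $\pp=\bigcap_{0\ne z\in V}z\tilde V$ with the prime attached to the convex hull $H$ of $\Gamma$ inside $\wt\Gamma$, choosing a countably incomplete $\kappa^{+}$-good ultrafilter on $U$ (legitimate since $|U|>\kappa$), and using the resulting $\kappa^{+}$-saturation to realize every Cauchy sequence of $V$ inside $\tilde V$ is the natural model-theoretic route; uniqueness of the pseudo-limit modulo $\pp$ then makes $\hat V\to\bar V$ a well-defined ring embedding.

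Part~(2) has a genuine gap, exactly where you flag it. From $g(\tilde w)\in\pp$ and \L o\'s you correctly deduce that $\val(g(a))$ is unbounded as $a$ ranges over $V$, and Hensel in $\hat V$ then yields \emph{some} root $w'\in\hat V$ of $g$. But ``replacing $w$ by a conjugate root'' is not innocent: distinct roots of an irreducible separable $g$ can determine non-isomorphic immediate extensions of $V$ (different valuations on $K[Y]/(g)$), so knowing that one conjugate lies in $\hat V$ says nothing, a priori, about the valuation ring $V'\cap K(w)$ determined by the specific $w\in\bar V$ you started with. What you actually need is that $w$ itself is approximable from $V$, i.e.\ that $\sup_{a\in V}\val_{V'}(w-a)$ is cofinal in $\Gamma$; this does not follow from $g(\tilde w)\in\pp$ alone, as one sees by factoring $g(a)=\prod_j(a-w_j)$ and noting that the large value can be absorbed by a conjugate $w_j\ne w$. (Your own transcendental parenthetical already shows that $\bar V$ contains pseudo-limits of bounded breadth, so $V'\subset\bar V$ does not force $V'\subset\hat V$.) A second, smaller issue: writing $V'$ as the filtered union of $V'\cap L$ over finite $L/K$ presupposes $K'/K$ algebraic, which ``$V'/V$ separable'' does not guarantee; the Kaplansky tower you mention at the end is the right organizing device, but then you must also verify that containment in $\bar V$ continues to supply the needed approximation data at each intermediate stage, which is not automatic.
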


\section{Complete intersection algebras}

For a general immediate extension of valuatin rings $V\subset V'$ we should expect that $V'$ is a  filtered  union of its complete intersection $V$-subalgebras. In the Noetherian case a morphism of rings is a filtered direct limit of smooth algebras iff it is a regular morphism (see Theorem \ref{T0}).

A {\em complete intersection} $V$-algebra {\it{essentially of finite type}} is a local $V$-algebra of type $C/(P)$, where $C$ is a localization of a polynomial $V$-algebra of finite type and $P$ is a regular sequence of elements of $C$.  Theorem \ref{T} stated below says that $V'$ is a  filtered
 union of its $V$-subalgebras of type $C/(P)$. Since $V'$ is local it is enough to say that $V'$ is  a  filtered
 union of its $V$-subalgebras of type $T_h/(P)$,  $T$ being a polynomial $V$-algebra of finite type, $0\not =h\in T$ and $P$ is a regular sequence of elements of $T$. Clearly, $ T_h$ is a smooth $V$-algebra and in fact it is enough to say that $V'$ is  a  filtered
 union of its $V$-subalgebras of type $G/(P)$, where $G$  is a smooth $V$-algebra of finite type and $P$ is a regular sequence of elements of $G$. Conversely, a $V$-algebra of such type $G/(P)$ has the form $T_h/(P)$ for some $T,h,P$ using \cite[Theorem 2.5]{S}. By abuse we understand by a {\em complete intersection } $V$-algebra of finite type a $V$-algebra of such type $G/(P)$, or $T_h/(P)$ which are not assumed to be flat over $V$.

 The following  lemma is a variant  of  Ostrowski (\cite[ page 371, IV and III]{O}, see also \cite[(II,4), Lemma 8]{Sch}) and \cite[Lemma 2]{P2}.

\begin{Lemma}(Ostrowski) \label{o1} Let $\beta_1,\ldots,\beta_m$ be any elements of an ordered abelian group $G$, $\lambda$ a limit ordinal and
 let $\{\gamma_s\}_{s<\lambda}$ be an increasing sequence of elements of G. Let $ t_1,\ldots, t_m$, be distinct integers. Then there  exists
an ordinal $\nu<\lambda$ such that $\beta_i+t_i\gamma_s$ are different for all $s>\nu$. Also  
there exists an integer $1\leq r\leq m$ such that
$$\beta_i+t_i\gamma_s>\beta_r+t_r\gamma_s$$ 
for all $i\not = r$ and $s>\nu$.
\end{Lemma}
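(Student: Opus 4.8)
The plan is to reduce everything to a single combinatorial fact about linear functions of the increasing parameter $\gamma_s$. Fix a pair of distinct indices $i\neq j$ and consider the expression
$(\beta_i+t_i\gamma_s)-(\beta_j+t_j\gamma_s)=(\beta_i-\beta_j)+(t_i-t_j)\gamma_s$.
Since $t_i\neq t_j$, the coefficient $t_i-t_j$ is a nonzero integer, so as $\gamma_s$ strictly increases this quantity is eventually of constant sign: more precisely, either it is positive for all large $s$, or negative for all large $s$, and in either case it is nonzero for all large $s$. Here one uses only that $G$ is an ordered abelian group and $t_i-t_j\neq 0$: if $(t_i-t_j)\gamma_s$ were to change sign infinitely often relative to $-(\beta_i-\beta_j)$, it would contradict monotonicity of $\{\gamma_s\}$ (a strictly increasing sequence can cross any fixed level at most once, in the sense that once $(t_i-t_j)\gamma_s>-(\beta_i-\beta_j)$ holds it persists if $t_i-t_j>0$, and symmetrically if $t_i-t_j<0$). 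So for each ordered pair $(i,j)$ there is an ordinal $\nu_{ij}<\lambda$ beyond which the sign of $(\beta_i+t_i\gamma_s)-(\beta_j+t_j\gamma_s)$ is fixed and nonzero.

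Next I would take $\nu=\sup_{i\neq j}\nu_{ij}$, a supremum of finitely many ordinals each $<\lambda$; since $\lambda$ is a limit ordinal this supremum is again $<\lambda$. For $s>\nu$ all the $\beta_i+t_i\gamma_s$ are pairwise distinct, which gives the first assertion. For the second assertion, for each $s>\nu$ the finite set $\{\beta_i+t_i\gamma_s : 1\le i\le m\}$ has a unique minimal element; I claim the index $r$ achieving it does not depend on $s$. Indeed, "$\beta_r+t_r\gamma_s$ is the minimum" is equivalent to the conjunction over all $i\neq r$ of the inequalities $\beta_i+t_i\gamma_s>\beta_r+t_r\gamma_s$, and each such inequality, by the previous paragraph, has a fixed truth value for all $s>\nu$. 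Since for every $s>\nu$ some index is the minimizer, and the minimizer is unique, exactly one index $r$ has all $m-1$ of these strict inequalities valid throughout $s>\nu$; that $r$ is the one we want, and $\beta_i+t_i\gamma_s>\beta_r+t_r\gamma_s$ for all $i\neq r$ and all $s>\nu$.

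The only genuine point requiring care — the "main obstacle", though it is mild — is the eventual-sign statement in the first paragraph: that in an ordered abelian group a function $s\mapsto c+k\gamma_s$ with $k$ a fixed nonzero integer and $\{\gamma_s\}$ strictly increasing is eventually of constant nonzero sign. One cannot simply invoke Archimedean or divisibility properties, so one argues directly: if $k>0$ then $s<s'$ gives $\gamma_s<\gamma_{s'}$ hence $c+k\gamma_s<c+k\gamma_{s'}$, so the sequence $c+k\gamma_s$ is itself strictly increasing and therefore takes the value $0$ for at most one $s$ and is positive forever after the first index where it is positive (and negative before that, if such indices exist); either way it is eventually of one fixed sign and nonzero past that index. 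The case $k<0$ is symmetric, replacing $k$ by $-k$ and reversing inequalities. Pinning down a single ordinal past which this holds uniformly — taking the successor of the last index where $c+k\gamma_s\le 0$ (resp.\ $\ge 0$), or $0$ if there is none — and then passing to the finite supremum over all pairs, completes the argument.
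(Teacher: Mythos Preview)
Your argument is correct. The paper itself does not supply a proof of this lemma; it merely records it as a variant of Ostrowski's result, with references to \cite[page 371, IV and III]{O}, \cite[(II,4), Lemma 8]{Sch}, and \cite[Lemma 2]{P2}. What you wrote is essentially the classical proof one finds in those sources: reduce to pairwise comparisons, observe that each difference $(\beta_i-\beta_j)+(t_i-t_j)\gamma_s$ is strictly monotone in $s$ (since $t_i-t_j\neq 0$) and hence vanishes for at most one $s$ and has eventually constant sign, then take the finite supremum of the cut-offs. Your treatment of the ``main obstacle'' is fine; the only cosmetic issue is the phrase ``the successor of the last index where $c+k\gamma_s\le 0$'', which is ill-defined when this inequality holds for all $s<\lambda$. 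You already handle that case implicitly (the sign is then negative throughout and one may take $\nu_{ij}=0$), but it would be cleaner to phrase it as: the set $\{s<\lambda : c+k\gamma_s\le 0\}$ is, by monotonicity, a downward-closed subset of $\lambda$, hence an ordinal $\mu\le\lambda$; if $\mu=\lambda$ the sign is negative for all $s$, and if $\mu<\lambda$ one takes $\nu_{ij}=\mu$.
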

The following two results use the above lemma.
\begin{Lemma} (\cite[Lemma 3]{P2}) \label{ka}
Let $V \subset V'$ be an  immediate extension of valuation rings, $K\subset K'$ its fraction field extension and  $(v_i)_{i<\lambda}$ an algebraic pseudo convergent sequence in $V$, which has a pseudo limit $x$ in $V'$, but no pseudo limit in $K$. Set\\
 $x_i=(x-v_i)/(v_{i+1}-v_i)$. Let $s\in {\bf N}$ be   the
 minimal degree of the polynomials $f\in V[Y]$ such that
 $\val(f(v_i))<\val(f(v_j))$ for large $i<j<\lambda$ and $g\in V[Y]$ a polynomial with $\deg g<s$. Then there exist $d\in V\setminus \{0\}$ and $u\in V[x_i]$ for some $i<\lambda$ with $g(x)=du$ and $\val(u)=0$.
\end{Lemma}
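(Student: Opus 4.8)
The plan is to combine the elementary valuation arithmetic of a pseudo convergent sequence with Ostrowski's Lemma \ref{o1}. Write $\gamma_i=\val(v_{i+1}-v_i)$. From the definition of pseudo convergence one gets that $(\gamma_i)_{i<\lambda}$ is strictly increasing and that $\val(v_i-v_j)=\gamma_i$ whenever $i<j<\lambda$; since $x$ is a pseudo limit of $(v_i)$ this gives $\val(x-v_i)=\gamma_i$, hence $\val(x_i)=0$ for every $i$, so each $x_i$ is a unit of $V'$. I would also record, using the minimality of $s$ together with the standard dichotomy for pseudo convergent sequences (\cite{Kap}, \cite{Sch}; it follows from Lemma \ref{o1} by induction on the degree), that for every $h\in V[Y]$ with $\deg h<s$ the sequence $\val(h(v_i))$ is eventually constant: a polynomial of degree $<s$ cannot satisfy $\val(h(v_i))<\val(h(v_j))$ for large $i<j$, and by the dichotomy the only remaining possibility is that it is eventually stationary.

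Next, set $n=\deg g<s$ (we may of course assume $g\neq 0$) and expand $g$ about $v_i$ using $x=v_i+(v_{i+1}-v_i)x_i$ and the Taylor/Hasse formula,
\[
g(x)=\sum_{k=0}^{n} b_{i,k}\,(v_{i+1}-v_i)^k\,x_i^k,\qquad b_{i,k}\in V,
\]
where $b_{i,k}$ is the $k$-th Hasse derivative of $g$ at $v_i$, whose coefficients are integer multiples of those of $g$, so $b_{i,k}\in V$. The key observation is that $b_{i,k}$ is the value at $v_i$ of a polynomial of degree $n-k<s$, so by the previous paragraph $\val(b_{i,k})$ equals a fixed $\beta_k$ for all large $i$; this $\beta_k$ is finite because the $v_i$ are pairwise distinct while that polynomial has only finitely many roots. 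Hence, writing $c_{i,k}=b_{i,k}(v_{i+1}-v_i)^k\in V$, we have $\val(c_{i,k})=\beta_k+k\gamma_i$ for all large $i$.

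Now apply Lemma \ref{o1} with the elements $\beta_0,\dots,\beta_n$ of the value group, the increasing sequence $(\gamma_i)$, and the distinct integers $t_k=k$: there are an ordinal $\nu<\lambda$ and an index $r\in\{0,\dots,n\}$ with $\beta_r+r\gamma_i<\beta_k+k\gamma_i$ for all $k\neq r$ and all $i>\nu$. Fix such an $i$, chosen also beyond the finitely many stabilization thresholds of the preceding paragraph. Since $\val(x_i)=0$, in the sum $g(x)=\sum_k c_{i,k}x_i^k$ the term $c_{i,r}x_i^r$ has strictly smallest valuation, so $\val(g(x))=\val(c_{i,r})$. Put $d=c_{i,r}\in V\setminus\{0\}$ and $u=g(x)/d=\sum_{k=0}^n (c_{i,k}/d)\,x_i^k$; each coefficient $c_{i,k}/d$ lies in $V$ because $\val(c_{i,k})\geq\val(d)$, so $u\in V[x_i]$, and $\val(u)=\val(g(x))-\val(d)=0$, which is exactly the assertion.

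The hard part is not any single computation but the bookkeeping in the middle step: one must notice that the auxiliary coefficients $b_{i,k}$ genuinely move with $i$, yet all of them are values of polynomials of degree $<s$, which is precisely what forces their valuations to stabilize and lets the expressions $\beta_k+k\gamma_i$, linear in $\gamma_i$, feed into Lemma \ref{o1}. The one genuinely external input is the dichotomy ``$\val(h(v_i))$ is eventually constant or eventually strictly increasing'', which is what turns the degree bound $\deg g<s$ into the stabilization used throughout.
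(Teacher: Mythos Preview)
Your argument is correct and is exactly the approach the paper points to: the paper does not reprove the lemma here but cites \cite[Lemma 3]{P2} while signaling that the proof rests on Ostrowski's Lemma~\ref{o1}, and your Taylor--Hasse expansion about $v_i$ together with the stabilization of $\val(b_{i,k})$ and the application of Lemma~\ref{o1} to the quantities $\beta_k+k\gamma_i$ is precisely that argument. One cosmetic point: in positive characteristic a Hasse derivative $g_k$ may vanish identically, so $\beta_k$ need not be finite as you assert; simply discard those indices $k$ before invoking Lemma~\ref{o1}, and nothing else changes.
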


\begin{Lemma} (\cite[Lemma 4]{P2}) \label{kap}
Let $V \subset V'$ be an  immediate extension of valuation rings, $K\subset K'$ its fraction field extension and  $(v_i)_{i<\lambda}$ an algebraic pseudo convergent sequence in $K$, which has a pseudo limit $x$ in $V'$, algebraic over $V$, but no pseudo limit in $K$. Assume that $h=$Irr$(x,K)$ is from $V[X]$. Then 
\begin{enumerate}
\item $\val(h(v_i))<\val(h(v_j))$ for large $i<j<\lambda$,
\item if $h$ has minimal degree among the polynomials $f\in V[X]$ such that
 $\val(f(v_i))<\val(f(v_j))$ for large $i<j<\lambda$, then $V''=V'\cap K(x)$  is a filtered  union of its complete intersection $V$-subalgebras. 
\end{enumerate}
\end{Lemma}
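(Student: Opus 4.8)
The goal is to show that, under the stated hypotheses, $V'' = V' \cap K(x)$ is a filtered union of complete intersection $V$-subalgebras of the form $G/(P)$ with $G$ smooth over $V$ and $P$ a regular sequence. The natural strategy is to exhibit $V''$ as the union of an explicit chain of such subalgebras indexed by $i < \lambda$, built out of the normalized increments $x_i = (x - v_i)/(v_{i+1} - v_i)$.

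**Step 1: reduce to part (1) and set up the subalgebras.** First I would dispose of part (1): since $x$ is a pseudo limit of $(v_i)$ in $V'$ but not in $K$, and $h = \operatorname{Irr}(x,K) \in V[X]$, one writes $h(Y) = \prod_j (Y - x^{(j)})$ over the algebraic closure with $x = x^{(1)}$, so $\operatorname{val}(h(v_i)) = \sum_j \operatorname{val}(v_i - x^{(j)})$. For $j=1$ the term $\operatorname{val}(v_i - x)$ is strictly increasing in $i$ by the pseudo-limit property; for the other roots Ostrowski's Lemma~\ref{o1}, applied to the exponents appearing in the relevant differences, forces each $\operatorname{val}(v_i - x^{(j)})$ to eventually be monotone (indeed eventually constant or increasing), so the sum is strictly increasing for large $i$. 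That gives (1).

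**Step 2: build the chain.** For (2), fix $i$ large enough that the minimality-degree hypothesis on $h$ is in force and consider $A_i = V[x_i]$ (a finitely generated $V$-subalgebra of $V'$ inside $K(x)$, since $x_i \in K(x)$). I want to produce, for each such $i$, a presentation $A_i' \cong G_i/(P_i)$ where $G_i$ is smooth over $V$ and $P_i$ a regular sequence, with $A_i' \subset A_{i+1}'$ and $\bigcup_i A_i' = V''$. The key input is Lemma~\ref{ka}: for any $g \in V[Y]$ with $\deg g < s = \deg h$, we have $g(x) = d\,u$ with $d \in V \setminus \{0\}$ and $u \in V[x_i]$ a unit ($\operatorname{val}(u)=0$) for $i$ large. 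Applying this to $g$ ranging over $1, Y, \dots, Y^{s-1}$ and to the coefficients of the division of higher powers of $Y$ by $h$, one controls the ring $V[x] = V[Y]/(h(Y))$ in terms of the $V[x_i]$: localizing $V[x_i]$ at the finitely many relevant units $u$ makes it a smooth $V$-algebra $G_i$, and the single relation coming from $h$ (rewritten in the variable $x_i$ via the affine substitution $Y = v_i + (v_{i+1}-v_i)X_i$, which is a polynomial of degree $s$ in $X_i$ with leading coefficient $(v_{i+1}-v_i)^s$ a unit times a constant) cuts out $V''$ locally. That single nonzerodivisor relation is the regular sequence $P_i$ of length one.

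**Step 3: verify the filtered-union properties.** One must check: (a) each $A_i'$ is genuinely inside $V'' = V' \cap K(x)$, which is immediate since all generators lie in $K(x)$ and in $V'$; (b) the transition maps are inclusions, which follows because $x_i$ and $x_{i+1}$ are related by $x_{i+1} = (x_i \cdot (v_{i+1}-v_i) - (v_{i+2}-v_{i+1}) + \dots)$ — concretely $x - v_i = (x-v_{i+1}) + (v_{i+1}-v_i)$ gives $x_i$ as a $V$-combination involving $x_{i+1}$, hence $V[x_i] \subseteq V[x_{i+1}][\text{units}]$; (c) the union is all of $V''$: any element of $V''$ is $g(x)/e$ for some $g \in V[Y]$ of degree $< s$ and $e \in V$, and by Lemma~\ref{ka} it equals $(d/e)u$ with $u$ a unit in $V[x_i]$ for $i$ large, and one checks $(d/e) \in V$ because the element lies in $V'$ and $u$ is a unit — so the element already lies in the localized ring $G_i$, a fortiori in $A_i'$.

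**Main obstacle.** The delicate point is Step 2: keeping the bookkeeping straight so that a \emph{single, finite} set of localizing elements and a \emph{single} relation work uniformly, and verifying that the relation $P_i$ is a nonzerodivisor on $G_i$ (so that $G_i/(P_i)$ really is a complete intersection in the sense defined above, and is honestly isomorphic to $V''$ rather than a proper quotient or localization thereof). This is where the minimal-degree hypothesis on $h$ is essential — it guarantees via Lemma~\ref{ka} that $V[x_i]$ is not "too small," i.e. that after inverting the finitely many units one recovers all of $V''$ and not a strict subring — and it is the step most likely to require careful valuation-theoretic estimates rather than formal manipulation.
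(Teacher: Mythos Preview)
The paper does not actually prove Lemma~\ref{kap}; it merely quotes it from \cite[Lemma 4]{P2}. So there is no in-paper argument to compare against, and I can only assess your plan on its own merits.

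Your overall architecture for part~(2)---build a cofinal chain out of the normalized increments $x_i=(x-v_i)/(v_{i+1}-v_i)$, use Lemma~\ref{ka} to show that every element of $V''$ already lives in some member of the chain, and present each member as a one-relation quotient of a smooth $V$-algebra---is the right shape and is essentially how the argument in \cite{P2} runs. Two points, however, need repair.

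\emph{Part (1).} Your proposed proof factors $h$ over an algebraic closure and sums $\val(v_i-x^{(j)})$. This is problematic: the valuation need not extend uniquely to the splitting field, and Lemma~\ref{o1} as stated says nothing about the behaviour of $\val(v_i-x^{(j)})$ for conjugate roots $x^{(j)}$. The clean argument stays inside $K$: Taylor-expand $h$ around $v_i$, so that $h(v_i)=h(v_i)-h(x)=-\sum_{k\ge 1} h_k(v_i)(x-v_i)^k$ with each $h_k\in V[Y]$ of degree $<s$. By the minimality hypothesis (or, at this stage, by the definition of $s$ in Lemma~\ref{ka}) each $\val(h_k(v_i))$ is eventually constant, and then Lemma~\ref{o1} applied to $\beta_k+t_k\gamma_i$ with $t_k=k$ gives a unique dominant term with $k\ge 1$, hence $\val(h(v_i))$ strictly increasing. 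Alternatively, invoke Lemma~\ref{limit} together with $h(x)=0$.

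\emph{Part (2), Step 2.} You have the roles of $G_i$ and the quotient reversed. You write that ``localizing $V[x_i]$ at the finitely many relevant units $u$ makes it a smooth $V$-algebra $G_i$'', and then that the relation coming from $h$ ``cuts out $V''$''. But $V[x_i]$ already sits inside $V''\subset K(x)$, where $h(x)=0$ holds identically; there is nothing left to cut. Moreover a localization of $V[x_i]$ has no reason to be smooth over $V$. The correct picture is the opposite one: the smooth algebra is a localization of the \emph{polynomial} ring $V[X_i]$ (or $V[X_i,X_i^{-1}]$), and the single relation is the suitably normalized transform $H_i(X_i)$ of $h$ under $Y\mapsto v_i+(v_{i+1}-v_i)X_i$, so that the complete intersection $V[X_i]_{(\cdots)}/(H_i)$ maps isomorphically onto the corresponding subring of $V''$. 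The ``finitely many units $u$'' produced by Lemma~\ref{ka} are what you invert in $V[X_i]$ (via their polynomial lifts), not in $V[x_i]$; this is exactly the bookkeeping you flag as the main obstacle, and it is where the minimal-degree hypothesis does its work by forcing the content of $H_i$ and the images of $1,x,\dots,x^{s-1}$ to be controlled by Lemma~\ref{ka}.

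Your Step~3 is fine once Step~2 is straightened out; in (c) note that an arbitrary element of $V''$ is $g(x)$ for some $g\in K[Y]$ of degree $<s$, and clearing denominators reduces to Lemma~\ref{ka} as you indicate.
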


\begin{Remark}\label{r0}  The extension $V\subset V''$ from (2) of the above lemma is isomorphic with the one constructed in \cite[Theorem 3]{Kap}. 
\end{Remark}

\begin{Lemma}   ( \cite[Lemma 5.2]{KV}) \label{limit} Assume that $ (v_j)_j$ is a pseudo-convergent sequence in $V$ with a pseudo limit $x$ in an extension $V'$ of $V$ but with no pseudo limit in $V$. Let $f\in V[X]$ be a polynomial. Then 
$(f(v_j))_j$ is ultimately pseudo-convergent and $f(x)$ is a pseudo limit of it.
\end{Lemma}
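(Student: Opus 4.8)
The plan is to expand $f$ about the pseudo limit $x$ itself, rather than about the varying points $v_j$, so that the Taylor coefficients are \emph{fixed} elements of $V'$; then the whole statement follows from a single application of Ostrowski's Lemma \ref{o1}. Put $\gamma_j := \val(x-v_j)$. The definition of pseudo limit gives $\val(x-v_j)=\val(v_j-v_k)$ for all $k>j$ and that $(\gamma_j)_j$ is strictly increasing. We may assume $m:=\deg f\ge 1$, the case $m=0$ being trivial (then $f(v_j)$ is constant and equals $f(x)$). Over $\Frac(V')$ write
\[
 f(Y)\;=\;f(x)\;+\;\sum_{t=1}^{m}c_t\,(Y-x)^t,\qquad c_t\in V',
\]
and observe that $c_m$ is the leading coefficient of $f$, hence $c_m\ne 0$; so $S:=\{\,t\in\{1,\dots,m\}:c_t\ne 0\,\}$ is nonempty.

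First I would record the valuations of the relevant differences. For $j<k$ we have $\val(v_j-x)=\gamma_j<\gamma_k=\val(v_k-x)$, and since the value group is torsion-free (being ordered) this forces $\val\big((v_k-x)^t-(v_j-x)^t\big)=t\gamma_j$ for every $t\ge 1$. Substituting $Y=v_j$ and $Y=v_k$ into the expansion and subtracting,
\[
 f(v_k)-f(v_j)\;=\;\sum_{t\in S}c_t\big[(v_k-x)^t-(v_j-x)^t\big],
\]
whose $t$-th summand has value $\val(c_t)+t\gamma_j$; likewise $f(v_j)-f(x)=-\sum_{t\in S}c_t(v_j-x)^t$ has $t$-th summand of value $\val(c_t)+t\gamma_j$.

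Next I would invoke Ostrowski's Lemma \ref{o1}, applied to the elements $\{\val(c_t):t\in S\}$ of the value group, the distinct integers $\{t:t\in S\}$, and the increasing sequence $(\gamma_j)$: there is an ordinal $\nu$ and an index $r\in S$ with $\val(c_r)+r\gamma_j<\val(c_t)+t\gamma_j$ for all $t\in S\setminus\{r\}$ and all $j>\nu$. For such $j$ and any $k>j$ the $t=r$ summand then strictly dominates both sums above, so
\[
 \val\big(f(v_k)-f(v_j)\big)\;=\;\val(c_r)+r\gamma_j\;=\;\val\big(f(x)-f(v_j)\big).
\]
Since $r\ge 1$ and $(\gamma_j)$ is strictly increasing, $j\mapsto\val(c_r)+r\gamma_j$ is strictly increasing; hence for $\nu<i<i'<i''$ we get $\val(f(v_i)-f(v_{i''}))<\val(f(v_{i'})-f(v_{i''}))$, i.e.\ $(f(v_j))_{j>\nu}$ is pseudo-convergent, and $\val(f(x)-f(v_i))=\val(c_r)+r\gamma_i=\val(f(v_i)-f(v_{i'}))$ for $i<i'$, i.e.\ $f(x)$ is a pseudo limit of it, which is the assertion.

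I expect the only place needing care is the combination of expanding about $x$ with the identity $\val\big((v_k-x)^t-(v_j-x)^t\big)=t\gamma_j$: it is exactly this that keeps the coefficients $c_t$ independent of $j$ and collapses the problem to one use of Ostrowski's lemma, sidestepping an induction on $\deg f$ and the bookkeeping needed to control Taylor coefficients about the moving point $v_j$.
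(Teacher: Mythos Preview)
The paper does not supply its own proof of this lemma; it merely quotes the statement from \cite[Lemma~5.2]{KV}. Your argument is correct and is the classical one (essentially Kaplansky's): expanding $f$ about the fixed pseudo limit $x$ makes the Taylor coefficients $c_t\in V'$ independent of $j$, and a single application of Ostrowski's Lemma~\ref{o1} to the family $\{\val(c_t)+t\gamma : t\in S\}$ isolates the eventually dominant term $t=r$. The key identity $\val\big((v_k-x)^t-(v_j-x)^t\big)=t\gamma_j$ is justified exactly as you say, since $\val((v_j-x)^t)=t\gamma_j<t\gamma_k=\val((v_k-x)^t)$ in the ordered group and the ultrametric inequality then gives the minimum.

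Two cosmetic remarks. First, the case $\deg f=0$ is not literally covered by the strict-inequality definition of ``pseudo-convergent'' used in the paper (a constant sequence fails it); it would be cleaner to say ``either $(f(v_j))_j$ is eventually constant equal to $f(x)$, or it is ultimately pseudo-convergent with pseudo limit $f(x)$''. Second, you never use the hypothesis that $(v_j)_j$ has no pseudo limit in $V$; indeed it is not needed for the conclusion and is presumably inherited from the surrounding context in \cite{KV}.
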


An interesting situation is given below.

\begin{Example}(Anonymous Referee of a previous paper) \label{E} Let $R={\bf F}_2[[t]]$ be the formal power series ring in $t$ over ${\bf F}_2={\bf Z}/2{\bf Z}$. Then the fraction field ${\bf F}_2((t))$ of $R$ has a $t$-adic valuation which has a canonical extension denoted by $\val$ to the perfect hull $K= {\bf F}_2((t))^{1/2^{\infty}}$ of  ${\bf F}_2((t))$. Let $V$ be the valuation ring defined by $\val$ on $K$. Let $a$ be a root of the polynomial $f=X^2-X-(1/t)\in K[X]$. Then the  extension of $\val$ to $K(a)$ gives an immediate extension $K\subset K(a)$ with  a value group $\Gamma$. This extension is closed to an example of \cite{O} (see Example \ref{e}). We can assume that the  root $a$ is a pseudo limit of the pseudo convergent sequence $(a_n)_{n\in {\bf N}}$ given 
by 
$$a_n=\sum_{i=1}^nt^{-2^{-i}},$$
which has no pseudo limit in $K$. Note that $\val(a_n)<0$ and so $a_n\not \in V$ for all $n\in {\bf N}$.

Let $0<\alpha\in \Gamma$ and $b=(b_j)_{j<\lambda}$ be a transcendental pseudo convergent sequence over $K(a)$ such that $\val(b_j)=\alpha$ for high enough $j$. By \cite[Theorem 2]{Kap} there exists an unique immediate transcendental extension $K(a)\subset L=K(a,z)$ such that $z$ is a pseudo limit of $b$ (the valuation of $L$ is still denoted by $\val$).  Then $\val(z)=\alpha>0$. Set $x=z+a\in L$. We have 
$$\val(f(x))=\val((x-a)^2-(x-a)+a^2-a-(1/t))=\val(x-a)+\val(x-a-1)=\alpha+0>0.$$

Note that 
$$\val(x-a_n)=\min\{\val(x-a),\val(a-a_n)\}=\val(a_{n+1}-a_n)$$
because $\val(x-a)=\alpha>0$ and  $\val(a-a_n)=\val(a_{n+1}-a_n)<0$, that is $x$ is also a pseudo limit of $(a_n)_n$. Also we see that  $\val(a_n^2-a_m-(1/t))<0$ for all $n,m\in {\bf N}$ with $m\not =n-1$
and  $a_n^2-a_m-(1/t)=0$ when $m=n-1$. Consequently,  $\val(a_n^2-a_m-(1/t))$ is either $<0$, or it is $\infty$  for all $n,m\in {\bf N}$ and it follows that   
 $\val(a_n^2-a_m-(1/t))$ is never $\val(f(x))$.
 
 We see that the ultimately pseudo convergent sequence $(a_n^2)_{n\in {\bf N}}$  (using Lemma \ref{limit}) has no pseudo limit in $K$ because $K^2=K$ and a pseudo limit of $(a_n^2)$ in $K$ would give a pseudo limit of $(a_n)$ in $K$, which is false. Set $y_0=x$, $y_1=y_0^2$, $g=tY_1-tY_0-1 \in K[Y_0,Y_1]$. We have $g(y_0,y_1)=tf(x)$. Then $\val(g(a_n^2,a_m))$, $n,m\in {\bf N}$ is never $\val( g(y_0,y_1)) $ because  $\val(a_n^2-a_m-(1/t))$ is never $\val(f(x))$ as we have seen. So Lemma \ref{ka} does not work for
  polynomials in two variables.
\end{Example}

\begin{Lemma}(\cite[Lemma 6]{P2}) \label{com} 
Let $B$ be a complete intersection algebra over a ring $A$ and $C$ a complete intersection  algebra over $B$. Then  $C$ is a complete intersection algebra over $A$.
\end{Lemma}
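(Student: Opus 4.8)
The statement to prove is Lemma~\ref{com}: if $B$ is a complete intersection algebra over $A$ and $C$ is a complete intersection algebra over $B$, then $C$ is a complete intersection algebra over $A$.

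The plan is to unwind the definitions given in the section. By definition, $B = G/(P)$ where $G$ is a smooth $A$-algebra of finite type and $P = (P_1,\dots,P_r)$ is a regular sequence in $G$; similarly $C = H/(Q)$ where $H$ is a smooth $B$-algebra of finite type and $Q = (Q_1,\dots,Q_s)$ is a regular sequence in $H$. First I would lift the picture over $A$: since $H$ is smooth of finite type over $B = G/(P)$, I want to produce a smooth $A$-algebra $H'$ of finite type together with a surjection $H' \to H$ whose kernel is generated by (the images of) $P_1,\dots,P_r$, i.e.\ $H = H'/(P)$. Concretely, write $H$ as a localization of $(B[Y]/(f))_M$ for a system $f$ over $B$ with Jacobian minor $M$; lift each $f_j$ to a polynomial $\tilde f_j \in G[Y]$ and set $H' = (G[Y]/(\tilde f))_{\tilde M}$, localized further as needed. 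Smoothness of $H'$ over $A$ follows from smoothness of $G$ over $A$ composed with the standardly smooth step $G \to H'$ (the Jacobian minor $\tilde M$ is a unit after localization); and by construction $H'/(P) \cong H$, where here $P$ denotes the images of $P_1, \dots, P_r$ in $H'$.

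The second and main step is the regular-sequence bookkeeping. In $H'$ we have two batches of elements: the (images of the) $P_i$, coming from $G$, and lifts $\tilde Q_1, \dots, \tilde Q_s \in H'$ of the $Q_k$. I claim that the concatenated sequence $P_1,\dots,P_r,\tilde Q_1,\dots,\tilde Q_s$ is a regular sequence in $H'$, so that $C = H'/(P,\tilde Q)$ exhibits $C$ as a complete intersection $A$-algebra. That $P_1,\dots,P_r$ is regular on $H'$ holds because $H'$ is faithfully flat (indeed smooth, hence flat) over $G$ and $P$ is a regular sequence in $G$ — flat base change preserves regularity of a sequence. Then $H'/(P) = H$, and $\tilde Q_1,\dots,\tilde Q_s$ maps to $Q_1,\dots,Q_s$, which is a regular sequence in $H$ by hypothesis; so the full sequence is regular in $H'$. (One should note the standing convention in the paper that these complete intersection algebras are \emph{not} assumed flat over the base, so I do not need to track flatness of $C$ over $A$ — only the presentation as smooth-mod-regular-sequence.)

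The potentially delicate point — and the step I would treat most carefully — is the very first one: producing the smooth $A$-algebra $H'$ with $H'/(P) \cong H$. The issue is that "smooth $B$-algebra of finite type" involves a localization step (inverting $M$ and possibly a further element $h$), and one must check that these inversions can be performed already over $A$/over $G$ so that the lift $H'$ is still of finite type and still smooth over $A$, with the kernel of $H' \to H$ generated precisely by the $P_i$ and nothing more. This is where \cite[Theorem 2.5]{S} (already invoked in the paragraph defining complete intersection algebras, to pass between the $T_h/(P)$ and $G/(P)$ descriptions) is the right tool; alternatively one argues directly that inverting a lift $\tilde M$ of a unit $M$ and then a lift $\tilde h$ of a unit $h$ gives a smooth $A$-algebra. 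Once $H'$ is in hand, the rest is the standard regular-sequence argument above, and the conclusion $C = H'/(P_1,\dots,P_r,\tilde Q_1,\dots,\tilde Q_s)$ is exactly a complete intersection presentation of $C$ over $A$.
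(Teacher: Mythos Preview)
The paper does not actually prove this lemma; it merely quotes it from \cite[Lemma 6]{P2}. So there is no in-paper proof to compare against.

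Your argument is the natural one and is correct. Writing $B=G/(P)$ with $G$ smooth of finite type over $A$ and $C=H/(Q)$ with $H$ smooth of finite type over $B$, you lift a standard-smooth presentation of $H$ over $B$ to a standard-smooth $G$-algebra $H'$ with $H'/(P)\cong H$, and then observe that the concatenation $(P_1,\dots,P_r,\tilde Q_1,\dots,\tilde Q_s)$ is regular in $H'$. Both steps are sound: the lift $H'=(G[Y]/(\tilde f))_{\tilde M}$ (further localized at a lift $\tilde h$) is standard smooth over $G$ because the Jacobian minor $\tilde M$ is inverted, and $H'/(P)\cong H$ is a direct computation; transitivity of smoothness then gives $H'$ smooth of finite type over $A$, and, as you note, \cite[Theorem~2.5]{S} lets one recast this in the $T_h$ form if desired.

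One small inaccuracy: you call $G\to H'$ ``faithfully flat (indeed smooth, hence flat)''. Smooth morphisms are flat but need not be faithfully flat (an open immersion is smooth). Fortunately you only use flatness, which is exactly what is needed to transport the regular sequence $P$ from $G$ to $H'$: tensoring the injective multiplication-by-$P_i$ maps along the flat $G\to H'$ keeps them injective, and one proceeds inductively since $H'/(P_1,\dots,P_i)$ remains flat over $G/(P_1,\dots,P_i)$. The quotient $H'/(P)=H$ is nonzero, so the proper-containment condition in the definition of a regular sequence is satisfied. With that correction the proof stands.
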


Using Lemma \ref{kap} and the above lemma we  proved the following result.

\begin{Theorem}  (\cite[Theorem 1]{P2})\label{T} Let  $ V'$ be an  immediate extension  of a valuation ring $V$  and  $K\subset K'$ the fraction field extension. If $K'/K$ is algebraic  then $V'$ is a filtered
 union of its complete intersection $V$-subalgebras of finite type.
\end{Theorem}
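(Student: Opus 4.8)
The plan is to reduce the general algebraic immediate extension $V \subset V'$ to a transfinite tower of "one-step" extensions, each of the form handled by Lemma~\ref{kap}, and then glue them together using Lemma~\ref{com}. First I would observe that since $V'$ is a filtered union of its finitely generated $V$-subalgebras, and each such subalgebra sits inside $V' \cap K(x_1,\dots,x_n)$ for finitely many $x_i \in K'$, it suffices to prove the statement for $V'' = V' \cap K(x)$ with a single $x \in K'$ algebraic over $K$; the general case then follows by a straightforward induction on the number of generators, invoking Lemma~\ref{com} at each step to see that a complete intersection over a complete intersection $V$-subalgebra is again a complete intersection $V$-subalgebra. So the heart of the matter is: \emph{if $x$ is algebraic over $K$, then $V'' = V' \cap K(x)$ is a filtered union of its complete intersection $V$-subalgebras of finite type.}

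Next I would set up the transfinite construction inside $K(x)$. If $x \in K$ there is nothing to do, so assume $[K(x):K] > 1$. The idea (following Kaplansky's classical analysis of immediate extensions, cf. Remark~\ref{r0}) is that $x$ is either a pseudo limit of an algebraic pseudo convergent sequence $(v_i)_{i<\lambda}$ from $K$ with no pseudo limit in $K$, and we may arrange that $h = \mathrm{Irr}(x,K)$ has \emph{minimal degree} among polynomials $f \in V[X]$ with $\val(f(v_i))$ eventually strictly increasing — exactly the hypothesis of Lemma~\ref{kap}(2). Applying that lemma directly gives that $V'' = V' \cap K(x)$ is a filtered union of its complete intersection $V$-subalgebras of finite type. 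If no such sequence with the minimality property exists directly, one builds an intermediate valuation ring $V \subset W \subset V''$ by a pseudo convergent sequence of \emph{smaller} degree, so that $W$ is already known (by induction on the degree, or by Lemma~\ref{kap} applied at a lower level together with Lemma~\ref{ka} to control the algebra generators) to be a filtered union of complete intersection $V$-subalgebras, and then $x$ becomes a pseudo limit of a sequence whose minimal polynomial has minimal degree \emph{over $W$}; Lemma~\ref{kap} over $W$ then expresses $V''$ as a filtered union of complete intersection $W$-subalgebras, and Lemma~\ref{com} converts these into complete intersection $V$-subalgebras.

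The final assembly step is to note that the colimit of a filtered system of filtered unions of complete intersection $V$-subalgebras is again a filtered union of complete intersection $V$-subalgebras — this is formal, since the class of complete intersection $V$-subalgebras of finite type is closed under the relevant base changes by Lemma~\ref{com}, and a finitely generated subalgebra of the colimit already lives in one of the intermediate rings. Running the induction on $[K(x):K]$, or more precisely on the degree parameter $s$ appearing in Lemmas~\ref{ka} and~\ref{kap}, terminates because each reduction strictly lowers this degree.

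The main obstacle I expect is the descent through the tower when $h = \mathrm{Irr}(x,K)$ does \emph{not} already have the minimal degree property of Lemma~\ref{kap}(2): one must genuinely produce the intermediate valuation ring $W$, verify that the extension $V \subset W$ is again immediate and algebraic (so the inductive hypothesis applies), and check that after passing to $W$ the pseudo convergent sequence witnessing $x$ can be taken so that its minimal polynomial attains minimal degree over $W$. Controlling how the complete intersection $W$-subalgebras that Lemma~\ref{kap} produces interact with the chosen complete intersection $V$-subalgebras of $W$ — so that Lemma~\ref{com} really applies at the level of \emph{finite type} subalgebras and the union remains filtered — is the delicate bookkeeping that the proof must handle carefully.
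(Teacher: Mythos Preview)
Your overall strategy---reduce to a simple immediate extension $V\subset V''=V'\cap K(x)$, invoke Lemma~\ref{kap}(2) when $h=\mathrm{Irr}(x,K)$ has minimal degree, and compose steps via Lemma~\ref{com}---is exactly the approach the paper indicates (the paper itself only records that the proof in \cite{P2} rests on Lemmas~\ref{kap} and~\ref{com}). The reduction to finitely many generators and then to a single generator is sound, as is the formal ``final assembly'' step.

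The difficulty you flag at the end, however, is more than bookkeeping; as written it is a genuine gap. When $h=\mathrm{Irr}(x,K)$ does \emph{not} realise the minimal degree $s$ attached to the pseudo convergent sequence $(v_i)$, Kaplansky's Theorem~3 (cf.~Remark~\ref{r0}) furnishes a pseudo limit $y$ with $[K(y):K]=s$, but only in \emph{some} immediate extension of $V$: there is no a priori reason that $y\in K(x)$, nor even that $y\in K'$. Hence the intermediate valuation ring $W$ you want, with $V\subsetneq W\subset V''$, need not exist inside $V''$, and the induction as you set it up can stall at the very first step. Saying ``one builds $W$'' hides exactly the point that requires proof.

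The standard repair is to reorganise the transfinite induction: run Zorn's lemma on the family of intermediate $V\subset W\subset V'$ that are already filtered unions of complete intersection $V$-subalgebras, and at a maximal such $W$ choose $x\in V'\setminus W$ of \emph{least possible} degree over $\mathrm{Frac}(W)$. One must then argue that for this particular $x$ the degree of $\mathrm{Irr}(x,\mathrm{Frac}(W))$ coincides with the minimal degree $s$ of the associated pseudo convergent sequence over $W$, so that Lemma~\ref{kap}(2) applies directly over $W$ and Lemma~\ref{com} finishes. Establishing that coincidence (or otherwise producing a suitable $y$ inside $K'$) is the substantive step your sketch does not supply.
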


As a consequence the next theorem follows.

\begin{Theorem}\label{T'} Let $V$ be a  valuation ring containing its residue field $k$   with a free value group (for example
when it is finitely generated) $\Gamma$.  Assume that $y$ is a system of elements of $V$ such that $\val(y)$ is a basis of $\Gamma$ and the fraction field  of $V$ is an algebraic extension of $ k(y)$. 
 Then $V$ is a filtered  union of its complete intersection $k$-subalgebras of finite type.
\end{Theorem}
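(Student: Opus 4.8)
The plan is to exhibit $V$ as an immediate extension of a ``monomial'' subvaluation ring over $k$, to apply Theorem~\ref{T}, and then to descend the base from that subring to $k$. Set $V_0:=V\cap k(y)$, the valuation ring of the restriction of $\val$ to $k(y)$. Since $y\subseteq k(y)$ and $\val(y)$ generates $\Gamma$, the value group of $V_0$ is all of $\Gamma$; since $k\subseteq V_0\subseteq V$ and $V$ has residue field $k$, the residue field of $V_0$ is again $k$. Hence $V_0\subseteq V$ is an immediate extension, and by hypothesis the fraction field extension $k(y)\subseteq\Frac(V)$ is algebraic, so Theorem~\ref{T} gives that $V$ is a filtered union of its complete intersection $V_0$-subalgebras of finite type. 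It therefore suffices to prove: (a) $V_0$ itself is a filtered union of its complete intersection $k$-subalgebras of finite type; and (b) every complete intersection $V_0$-subalgebra $W$ of $V$ of finite type is contained in a complete intersection $k$-subalgebra of $V$ of finite type. Granting these, and using that the family of the $W$'s from Theorem~\ref{T} is filtered and that the family in (a) is filtered, one assembles from (b) a filtered family of complete intersection $k$-subalgebras of $V$ of finite type with union $V$.

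For (a): any finite subset of $V_0$ lies in $V\cap k(y')$ for a finite subtuple $y'$ of $y$, whose values still form a basis of the free subgroup they generate, so I may assume $y=(y_1,\dots,y_s)$ finite and identify $\Gamma=\ZZ^s$ with $\gamma_i:=\val(y_i)$ the standard basis. Given $f_1,\dots,f_m\in V_0$, write $f_j=p_j/q_j$ with $p_j,q_j\in k[y]$ and pull out of numerator and denominator the monomial of least value, so $f_j=y^{\delta_j}u_j$ with $u_j$ a ratio of polynomials having nonzero constant term (hence a unit of $V_0$ lying in $k(y)$) and $\delta_j\in\ZZ^s$ with $\val(y^{\delta_j})\ge 0$. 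Inverting a single $g\in k[y]$ making the finitely many denominators of the $u_j$ units, one gets $k[y]_g\subseteq V_0$, a localization of a polynomial ring, hence smooth and a fortiori a complete intersection over $k$. It remains to adjoin the Laurent monomials $y^{\delta_j}$: for $\delta=\delta^{+}-\delta^{-}$ with $\delta^{\pm}\ge 0$ of disjoint support, $k[y]_g[y^{\delta}]=k[y]_g[T]/(y^{\delta^{-}}T-y^{\delta^{+}})$, and since $y^{\delta^{+}},y^{\delta^{-}}$ are coprime monomials in the unique factorization domain $k[y]_g$ this kills a single nonzerodivisor, so it is a complete intersection over $k[y]_g$; by Lemma~\ref{com} it is one over $k$. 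Iterating — adjoining the $y^{\delta_j}$, or a well-chosen smaller family of Laurent monomials of which all the $\delta_j$ are $\ZZ_{\ge 0}$-combinations, one at a time as hypersurface extensions — produces a complete intersection $k$-subalgebra of $V_0$ of finite type containing $f_1,\dots,f_m$.

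For (b): write $W=(V_0[Y_1,\dots,Y_N])_h/(P)$ with $P=(P_1,\dots,P_c)$ a regular sequence, $w_i\in V$ the image of $Y_i$, and $\pi_l:=h^{m_l}P_l\in V_0[Y]$. Only finitely many elements of $V_0$ occur among the coefficients of $h$ and of the $\pi_l$, so by (a) they lie in a complete intersection $k$-subalgebra $A\subseteq V_0$ of finite type, which I take large enough. Put $W^{A}:=(A[Y_1,\dots,Y_N])_h/(P)$; then $W^{A}\otimes_A V_0=W$, and the image of $W^{A}$ in $V$ is the subring $A[w_1,\dots,w_N,1/h(w)]$. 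Since $W=\varinjlim_A W^{A}$ over the filtered system of such $A$ and all rings in sight are Noetherian, a standard spreading-out argument lets me enlarge $A$ so that $P$ is a regular sequence in $(A[Y])_h$ and $W^{A}\to W$ is injective; then $W^{A}$ is a complete intersection over $A$, hence over $k$ by Lemma~\ref{com}, and it equals the subring $A[w_1,\dots,w_N,1/h(w)]\subseteq V$, which therefore works.

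The main obstacle is the hypersurface bookkeeping in (a): coprimality of monomials makes each adjunction a hypersurface over a unique factorization domain such as a localization of $k[y]$, but that property may be lost after one adjunction, so the Laurent monomials to be adjoined must be chosen with care — for instance by first putting the valuation into lexicographic standard form via a unimodular Laurent-monomial change of variables that keeps all new variables in $V_0$, after which the positive cone, and hence the set of monomials one needs, becomes transparent. The spreading-out statements invoked in (b) are routine but should likewise be checked with a little care.
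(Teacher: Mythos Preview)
Your overall architecture is exactly the paper's: set $V_0=V\cap k(y)$, observe that $V_0\subset V$ is immediate with algebraic fraction-field extension, apply Theorem~\ref{T}, and then compose with a description of $V_0$ over $k$ via Lemma~\ref{com}. The paper compresses all of this into three lines, citing \cite[Lemma~27(1)]{P} for immediacy and \cite[VI (10.3), Theorem~1]{Bou} (or \cite[Lemma~26(1)]{P}) for the structure of $V_0$, then invoking Theorem~\ref{T}.

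The one substantive difference is your part~(a). The paper does not prove that $V_0$ is a filtered union of complete intersection $k$-subalgebras; it cites the \emph{stronger} fact that $V_0$ is a filtered union of localizations of polynomial $k$-subalgebras, hence of smooth $k$-subalgebras. Your ``main obstacle'' (loss of unique factorization after one hypersurface adjunction) then disappears: the unimodular Laurent-monomial change of coordinates you propose is precisely what that cited lemma does, and once the finitely many $\delta_j$ lie in the positive orthant of the new basis you land directly in a localized polynomial ring, with no hypersurface bookkeeping at all. So your fix is correct, but it actually yields smoothness, not merely complete intersection, and you could replace the whole iterative argument by a reference. Your part~(b) makes explicit the spreading-out/composition step that the paper leaves implicit; the argument is sound (the kernel of $W^A\to V$ is finitely generated, and a regular sequence stays regular after enlarging $A$ to witness the Koszul relations), and Lemma~\ref{com} finishes it.
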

\begin{proof}
 By \cite[Lemma 27 (1)]{P} we see that $ V$ is an immediate extension of the valuation ring $ V_0=V\cap k(y)$, which is a filtered  union of localizations of its polynomial $ k$-subalgebras (see \cite[Theorem 1, in VI (10.3)]{Bou}, or \cite[Lemma 26 (1)]{P}).  In fact, important here is that there exists a cross-section $s:\Gamma\to K^*$ (a {\em cross-section}
of $V$ is a section
in the category of abelian groups of the valuation map $\val : K^* \to \Gamma$). Since $\Gamma$ is free we define $s$ by $\val(y)\to y$. 
Now it is enough to apply Theorem \ref{T}
 for $V_0\subset V$. 
\hfill \ \end{proof}

The above theorem connects the theory of valuation rings with the theory of complete intersection local rings. We remind that for a given complete intersection local ring $(R,\mm,k)$ a minimal free resolution over $R$ of the residue field $k$ is described in \cite[Theorem 4]{Ta} and others as 
 \cite[Theorem 2.7]{As}, \cite[§6]{Av}, \cite[Proposition 1.5.4]{GL},  \cite[§3]{HM}, \cite[ Theorem 2.5]{AHS}, \cite[Theorem 4.1]{NV}.


\begin{thebibliography}{99}
\normalsize{

\bibitem{AD} B.\  Antieau, R.\ Datta, {\em Valuation rings are derived splinters}, Math. Zeitschrift, {\bf 299}, (2021, 827-851.

\bibitem{A} M.\ Artin,  {\em Construction techniques for algebraic spaces}, Actes Congres Intern. Math. {\bf 1}, (1970), 419-423.
 

\bibitem{As} E.\ Assmus, {\em On the homology of local rings}, Illinois J. Math., 3(2) (1959), 187–199.

\bibitem{Av} L.\ Avramov, \em{Infinite free resolutions}, Six Lectures on Commutative Algebra (Bellaterra, 1996), Progr. Math., 166 (1998),
Birkhauser, Basel, 1–118.

\bibitem{AHS} L.\ Avramov, I.\ Henriques, L.\ Sega, {\em Quasi-complete intersection homomorphisms}, Pure Appl. Math. 9 (2013),
no. 4, 579–612


\bibitem{Bou}   N.\ Bourbaki,  {\em \'El\'ements de math\'ematique. Alg\`ebre commutative}, chap. I-VII, Hermann (1961, 1964,
1965); chap. VIII-X, Springer (2006, 2007) (French).

 
\bibitem{GL} T.\ Gulliksen, G.\ Levin, {\em Homology of local rings}, Queen’s Papers in Pure Appl. Math. 20 (1969), 
 
\bibitem{HM} J.\ Herzog, A.\ Martsinkovsky, {\em Gluing Cohen-Macaulay modules with applications to quasihomogeneous complete intersections with isolated singularities}, Comment. Math. Helv. 68 (1993), no. 3, 365–384.



\bibitem{Kap} I.\ Kaplansky, {\em Maximal fields with valuations}, Duke Math. J. {\bf 9} (1942), 303-321.

\bibitem{KV} F.\ V.\   Kuhlmann, I.\ Vlahu {\em The relative approximation degree in valued function fields}, Math. Z., {\bf 276}, (2014), 203–235.


\bibitem{NV} V.\ Nguyen, O.\ Veliche, {\em Iterated mapping cones on the Koszul complex and their application to complete intersection rings}, 
Journal of Algebra and Its Applications, https://www.worldscientific.com/doi/10.1142/S0219498825502342.




\bibitem{O} A.\  Ostrowski, {\em Untersuchungen zur arithmetischen Theorie der K\"orper}, Math. Z. {\bf39} (1935), no. 1,
321-404.

 \bibitem{Po0} D.\ Popescu, {\em General Neron Desingularization and approximation}, Nagoya Math. J., {\bf 104}, (1986), 85-115.
 
\bibitem{Po0'} D.\ Popescu, {\em Letter to the Editor, General Neron Desingularization and approximation}, Nagoya Math. J., {\bf 118}, (1990), 45-53.
 

 \bibitem{Po1} D.\ Popescu, {\em Algebraic extensions of valued fields}, J. Algebra {\bf 108}, (1987), no. 2,
513-533.


 \bibitem{P} D.\ Popescu, {\em N\'eron desingularization of extensions of valuation rings with an Appendix by K\k{e}stutis \v{C}esnavi\v{c}ius}, in   "Transcendence in Algebra, Combinatorics, Geometry and Number Theory",  Eds. Alin Bostan, Kilian
Raschel, Springer Proceedings in Mathematics and Statistics 373, (2021), 275-307.
   
 \bibitem{P-1} D.\ Popescu, {\em   Valuation rings of dimension one as limits of smooth algebras}, Bull. Math. Soc.
Sci. Math. Roumanie, {\bf 64(112)}, (2021), 65-74.


\bibitem{P1}  D.\ Popescu, {\em Extensions of valuation rings containing $\bf Q$ as limits of smooth algebras}, Bull. Math.  Soc. Sci. Math. Roumanie, {\bf 65(113)}, No 2, (2022), 253-262.

\bibitem{P0} D.\ Popescu, {\em Immediate extensions of valuation rings and ultrapowers}, Math. Reports {\bf 25(75)}, (2023), 505–512. 


\bibitem{P2}  D.\ Popescu, {\em Algebraic valuation ring extensions as limits of complete intersection algebras},  Revista Matematica Complutense, {\bf 37}, No 2, (2024), 467-472.

\bibitem{Sch} O.\ F.\ G.\ Schilling, {\em The theory of valuations}, Mathematical Surveys, Number IV, American Math. Soc., (1950).
\bibitem{SP} A.\ J.\ de Jong et al., The Stacks Project. Available at \\  http://stacks.math.columbia.edu.


 \bibitem{S} R.\ Swan, {\em Neron-Popescu desingularization}, in "Algebra and Geometry", Ed. M. Kang, International Press, Cambridge, (1998), 135-192.

\bibitem{Ta} J.\ Tate, {\em Homology of Noetherian rings and local rings}, Illinois J. Math., 1 (1957), 14–27.


\bibitem{Z}  O.\ Zariski, {\em Local uniformization on algebraic varieties}, Ann. of Math. {\bf 41}, (1940), 852-896.  
 
}
\end{thebibliography}
 \end{document}